\numberwithin{equation}{section}
\theoremstyle{plain}
\newtheorem{thm}{Theorem}
\newtheorem{thma}{Theorem}[section]
\newtheorem{cor}{Corollary}[section]
\newtheorem{lem}{Lemma}[section]
\newtheorem{prop}{Proposition}[section]
\theoremstyle{remark}
\newtheorem{rmk}{Remark}[section]
\newcommand{\mrm}{\mathrm}
\newcommand{\mbb}{\mathbb}
\newcommand{\q}{\quad}
\newcommand{\md}{\text{ mod }}
\author{Pramath Anamby}
\address{Department of Mathematics\\
Harish-Chandra Research Institute\\
Prayagraj - 211019, India.}
\email{pramathav@hri.res.in, pramath.anamby@gmail.com}
\title[Non-vanishing of theta components]{Non-vanishing of theta components of Jacobi forms with level and an application}
\subjclass[2020]{Primary 11F50, 11F46; Secondary 11F30, 11F37 } 
\keywords{Jacobi forms, theta components, Fourier coefficients, Siegel modular forms, non--vanishing}
\begin{document}
\begin{abstract}
We prove that a non--zero Jacobi form of arbitrary level $N$ and square--free index $m_1m_2$ with $m_1|N$ and $(N,m_2)=1$ has a non--zero theta component $h_\mu$ with either $(\mu,2m_1m_2)=1$ or $(\mu,2m_1m_2)\nmid 2m_2$. As an application, we prove that a non--zero Siegel cusp form $F$ of degree $2$ and an odd level $N$ in the Atkin--Lehner type newspace is determined by fundamental Fourier coefficients up to a divisor of $N$.
\end{abstract}
\maketitle
\section{Introduction}
The invariance of Jacobi forms with respect to $\mbb Z^2$ gives us the  decomposition of a Jacobi form into \textit{theta components}, which is a collection of one variable holomorphic functions with automorphic properties under $\mrm{SL}(2,\mbb Z)$. These theta components act as a bridge between different type of modular forms. For example, through Fourier--Jacobi coefficients and the Eichler--Zagier map, they give an important map between the Siegel modular forms of degree $2$ and the half integral weight modular forms. The theta decomposition also gives us the mappings from the space of Jacobi forms to the spaces of vector--valued modular forms and the half--integral weight modular forms. Moreover, the knowledge about the theta components is equivalent to knowing the Jacobi form itself. Thus it is important to understand the properties of these theta components.

One such interesting problem is the non--vanishing of theta components of a Jacobi form. In \cite{skogman2004fourier}, a non--vanishing result was proved for the theta components of Jacobi forms of full level and index $p$, $p^2$ or $pq$, where $p$ and $q$ are primes. In particular, for such indices, by analysing the structure of square classes, it was shown that the Jacobi form is uniquely determined by one of the associated theta components. The study of these objects has many important arithmetic consequences. For example, in \cite{bocherer2018fundamental}, non--vanishing of theta components of Jacobi forms of full level and matrix index M with discriminant of M square-free, proved to be a crucial step in studying the fundamental Fourier coefficients of Siegel modular forms, which further had applications in establishing the functional equation of the spinor zeta function of degree 3. 

In this article, we study the non--vanishing of theta components of Jacobi forms for the congruence subgroup $\Gamma_0(N)$. For Jacobi cusp forms of even weight, square--free level $N$ and index $p$, a prime with $(p,2N)=1$, it was proved by Martin in \cite{martin2020degree} that all the theta components are non--zero. Using the techniques from \cite{bocherer2018fundamental} and the properties of generalized quadratic Gauss sums, we prove the following non--vanishing result for the theta components of a Jacobi form of general level $N$ and square--free index.
\begin{thm}\label{th:thetanz}
Let $N$ be an odd integer and $m_1$, $m_2$ be square--free integers such that $m_1|N$ and $(N,m_2)=1$. For any non--zero $\phi\in J_{k,m_1m_2}(N)$ there exists a $\mu\md 2m_1m_2$ with either $(\mu,2m_1m_2)=1$ or $(\mu,2m_1m_2)\nmid 2m_2$ such that the theta component $h_\mu\neq 0$.
\end{thm}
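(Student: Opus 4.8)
Write $m=m_1m_2$. The plan is to exploit the theta decomposition $\phi=\sum_{\mu\md 2m}h_\mu\,\theta_{m,\mu}$ together with the classical transformation of the theta series $\theta_{m,\mu}$ under $\mrm{SL}(2,\mbb Z)$. Since each $\theta_{m,\mu}$ is modular of weight $1/2$ for all of $\mrm{SL}(2,\mbb Z)$ (up to the standard half--integral multiplier), the invariance $\phi|\gamma=\phi$ for $\gamma\in\Gamma_0(N)$ forces, for every $\mu$, a linear identity expressing $h_\mu|_{k-1/2}\gamma$ as a combination of the $h_\nu$ with coefficients built from the generalized quadratic Gauss sums $G_c(a,b)=\sum_{x\md c}e\big((ax^2+bx)/c\big)$ attached to $\gamma$. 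Because $m=m_1m_2$ is square--free with $(m_1,m_2)=1$, the Chinese Remainder Theorem factors this action into an ``$m_1$--part'' and an ``$m_2$--part''. Two structural facts will drive everything. First, $m_1\mid N$ divides the lower--left entry of every $\gamma\in\Gamma_0(N)$, so on the $m_1$--part $\gamma$ acts by a nonzero root of unity times multiplication of residues by a unit; this is invertible but preserves both of the sets $\{\mu:(\mu,m_1)=1\}$ and $\{\mu:(\mu,m_1)>1\}$. Second, because $(N,m_2)=1$ and $N$ is odd, as $\gamma$ runs over $\Gamma_0(N)$---even over those $\gamma$ acting trivially on the $m_1$--part---its reduction modulo the level $\ell$ of the $m_2$--part of the Weil representation already runs over the whole of $\mrm{SL}(2,\mbb Z/\ell\mbb Z)$; so on the $m_2$--part one has exactly the freedom available in level one.

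These two facts reduce Theorem~\ref{th:thetanz} to a full--level statement. It suffices to prove the contrapositive: if $h_\mu=0$ whenever $(\mu,2m)=1$ or $(\mu,2m)\nmid 2m_2$, then $\phi=0$. A short gcd computation (using that $m_1$ is odd and coprime to $m_2$) shows that the second condition is equivalent to $(\mu,m_1)>1$; hence every possibly nonzero $h_\mu$ satisfies $(\mu,m_1)=1$. Fixing the $m_1$--residue class of such a $\mu$, the first structural fact says the $\Gamma_0(N)$--action mixes only the $m_2$--components, and within this class the surviving hypothesis ``$(\mu,2m)=1$'' reads simply ``$(\mu,2m_2)=1$''. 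We are thus reduced to the assertion: a Jacobi form of square--free index $m_2$ all of whose theta components $h_\mu$ with $(\mu,2m_2)=1$ vanish is identically zero. For $m_2$ a prime or a product of two primes this is Skogman's theorem \cite{skogman2004fourier}, and the matrix--index analogue with square--free discriminant is due to B\"ocherer--Das \cite{bocherer2018fundamental}; I would establish it for an arbitrary square--free $m_2$ by induction on the number of prime divisors of $2m_2$, stripping off one prime $q\mid 2m_2$ at a time. At each step one applies the $S$-- and $T$--type transformations in the $q$--direction and invokes the explicit evaluation, together with the precise vanishing criteria, of the Gauss sums $G_{q^{e}}(a,b)$, to deduce that vanishing of the components with $q\nmid\mu$ forces that of the components with $q\mid\mu$; the symmetry $h_{-\mu}=(-1)^kh_\mu$ is used as well. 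The prime $q=2$, which always divides $2m_2$, is the point at which oddness of $N$ is essential: it is what makes the inverting ($S$--type) move modulo the $2$--part of $2m_2$ come from an element of $\Gamma_0(N)$ at all.

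I expect the prime $2$ to be the main obstacle. For odd primes the inductive step is essentially the square--class bookkeeping of \cite{skogman2004fourier}, powered by the non--vanishing of the relevant Gauss sums; but the Weil representation of the finite quadratic module attached to the $2$--part of $x\mapsto x^2/4m_2$ is more delicate, the generalized quadratic Gauss sums there genuinely can vanish, and one must pin down exactly when in order for the peeling--off at $2$ to close. Establishing these $2$--adic Gauss sum identities and threading them through the induction---so that the residues coprime to $2$ in the $q=2$ direction really do determine the rest---is the technical heart of the proof; by comparison the CRT splitting, the rigidity of the $m_1$--part, and the transfer of the level--one argument to the $m_2$--part are routine.
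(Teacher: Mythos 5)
Your overall architecture --- theta decomposition, passing to the contrapositive, the gcd observation that ``$(\mu,2m_1m_2)\nmid 2m_2$'' is exactly ``$(\mu,m_1)>1$'', the CRT splitting of the multiplier system into an $m_1$--part and an $m_2$--part, the rigidity of the $m_1$--part coming from $m_1\mid N$, and an induction over the primes dividing the index --- is the same skeleton the paper uses. But the proposal has a genuine gap precisely where you locate it. The reduction lands on the assertion that a level--one, square--free--index object all of whose coprime theta components vanish must vanish, and you then say you \emph{would} establish this by stripping off the primes of $2m_2$ one at a time via $S$-- and $T$--moves, with the prime $2$ left as an acknowledged open obstacle. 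That assertion \emph{is} the theorem; without it nothing is proved. Two further points in the reduction need care: (i) for a fixed $m_1$--class the surviving components transform under a \emph{unit--rescaled} Weil representation (the form $x^2/4m_1m_2$ restricted to the $2m_2$--component is $\overline{m_1}\,x^2/4m_2$ up to units), so Skogman and B\"ocherer--Das cannot be quoted verbatim --- you need the statement with an arbitrary unit twist $l$, $(l,2m_2)=1$; (ii) the surjectivity of the subgroup of $\Gamma_0(N)$ that is trivial on the $m_1$--part onto $\mathrm{SL}(2,\mbb Z/4m_2\mbb Z)$ is true but is asserted, not checked.

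It is worth contrasting this with how the paper closes the same gap, because the comparison shows your $2$--adic worry can be engineered away. The paper never uses an $S$--type move at all: it applies the single parabolic element $\begin{psmallmatrix}1&0\\N&1\end{psmallmatrix}\in\Gamma_0(N)$, whose multiplier is the Gauss sum $\tfrac{1}{4m}G(N,2(\nu-\mu),4m)$. Since $(N,4m_1m_2)=m_1$, this vanishes unless $m_1\mid(\nu-\mu)$ (this is your rigidity of the $m_1$--part, and it is what eliminates the $(\nu,m_1)>1$ components), and when it does not vanish one completes the square to reduce to $G(M,0,4m_2)=(1+i)\epsilon_M^{-1}\sqrt{4m_2}\bigl(\tfrac{M}{4m_2}\bigr)\neq 0$, because the modulus is divisible by $4$ and the linear term has been removed. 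So every surviving coefficient is a nonzero root of unity and the delicate vanishing of $2$--adic Gauss sums that you anticipate simply never enters. What remains is linear algebra: translations $\tau\mapsto\tau+t$ plus linear independence of characters cut the relation down to square classes $\nu^2\equiv\nu_0^2\bmod 4m_1m_2$, and the resulting matrix with entries $e_{4m_2}\bigl(l(\nu-\mu)^2/m_1^2\bigr)$ is shown to have maximal rank by a tensor--product ($A\otimes B$) induction over the primes of $m_1m_2$. To complete your route you would either have to carry out essentially this rank computation for the twisted level--one problem (at which point you have rederived the paper's key lemma), or locate the unit--twisted square--free--index non--vanishing statement in the literature; as written, the decisive step is missing.
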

Theorem \ref{th:thetanz} is a generalization of the non--vanishing result of \cite{bocherer2018fundamental} for Jacobi forms with level in degree 1. It will be interesting to investigate this non--vanishing question for non square--free indices and also for higher degree Jacobi forms.

As an application, we prove the following result for the Siegel cusp forms of degree $2$ in the Atkin--Lehner type newspace (see section \ref{sec:newforms}). For positive integers $k$ and $N$, let $S_k^2(N)$ denote the space of Siegel cusp forms of degree $2$, weight $k$ and level $N$. $S_k^{2,new}(N)\subseteq S_k^2(N)$ be the subspace of Atkin--Lehner type newforms (see section \ref{sec:newforms}). The Fourier coefficients $A(F,T)$ of any $F\in S_k^2(N)$ are supported on $\Lambda_2^+$, the set of $2\times 2$ half--integral, symmetric, positive definite matrices. 
 
\begin{thm}\label{thm:fund}
Let $k>2$ and $N=p_1^{\alpha_1}p_2^{\alpha_2}...p_t^{\alpha_t}$ be an odd integer. Suppose $F\in S_k^{2,new}(N)$ is non--zero, then there exist infinitely many $\mrm{GL}(2,\mbb Z)$--inequivalent $T\in \Lambda_2^+$ such that 
\begin{enumerate}
\item $A(F, T)\neq 0$.
\item $4 \mrm{det}(T)$ is of the form $\prod_{1}^{t}p_j^{r_j} n$ where $n$ is odd and square--free, $(n, N)=1$ and $0\le r_j\le \alpha_j$.
\end{enumerate}
\end{thm}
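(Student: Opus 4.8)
The plan is to deduce Theorem~\ref{thm:fund} from Theorem~\ref{th:thetanz} by passing through the Fourier--Jacobi expansion of $F$. Write
\[
F(Z) = \sum_{m\geq 1} \phi_m(\tau,z)\, e^{2\pi i m \omega}, \qquad Z = \begin{pmatrix}\tau & z\\ z & \omega\end{pmatrix},
\]
so that each $\phi_m \in J_{k,m}(N)$ is a Jacobi cusp form of level $N$ and index $m$. Since $F\neq 0$, some $\phi_m\neq 0$; the first step is to show that one may in fact choose the index $m$ to be of the shape $m_1 m_2$ with $m_1\mid N$ square--free, $m_2$ odd square--free and $(m_2,N)=1$. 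This is the place where I expect the newform hypothesis $F\in S_k^{2,\mathrm{new}}(N)$ to be essential: one wants to rule out the possibility that all nonzero Fourier--Jacobi coefficients have index divisible by a square or by a ``bad'' prime power, and the Atkin--Lehner/newform theory developed in Section~\ref{sec:newforms} should give exactly the control needed (e.g.\ via oldform degeneracy maps that raise the index, or via the behaviour under Atkin--Lehner involutions $w_{p^{\alpha_j}}$). I would isolate this as a lemma: \emph{if $F\in S_k^{2,\mathrm{new}}(N)$ is nonzero, then there is a nonzero $\phi_m$ with $m = m_1 m_2$ as above.}

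Granting such a $\phi_m\in J_{k,m_1m_2}(N)\setminus\{0\}$, apply Theorem~\ref{th:thetanz} to obtain $\mu \bmod 2m_1m_2$ with $h_\mu\neq 0$ and either $(\mu,2m_1m_2)=1$ or $(\mu,2m_1m_2)\nmid 2m_2$. The theta component $h_\mu$ is a nonzero holomorphic function (a component of a vector--valued modular form of weight $k-1/2$ on $\mathrm{SL}(2,\mbb Z)$), so it has infinitely many nonzero Fourier coefficients $c_\mu(D)$, indexed by $D \equiv -\mu^2 \pmod{4m_1m_2}$, $D<0$. Each such coefficient equals a Fourier coefficient $c(\phi_m; n, r)$ of the Jacobi form with $r\equiv \mu$, $4mn - r^2 = |D|$, which in turn is the Siegel coefficient $A(F,T)$ for $T = \begin{pmatrix} n & r/2 \\ r/2 & m\end{pmatrix}$, with $4\det T = 4mn - r^2 = |D|$. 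Thus we get infinitely many nonzero $A(F,T)$ with $4\det T$ running over a fixed congruence class of negatives of the $h_\mu$-support. The remaining task is to pin down the arithmetic shape of $4\det T$: we need $4\det T = \bigl(\prod_j p_j^{r_j}\bigr)\, n$ with $n$ odd, square--free, $(n,N)=1$, $0\le r_j\le\alpha_j$.

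This last step is a local analysis prime by prime, and it is where the precise form of the conclusion of Theorem~\ref{th:thetanz} (the dichotomy on $(\mu,2m_1m_2)$) gets used. For a prime $p\mid m_2$ (so $p\nmid N$, $p$ odd): the condition $(\mu,2m_1m_2)=1$ or $\nmid 2m_2$ is engineered so that, combining $D\equiv -\mu^2\pmod{4m_1m_2}$ with the freedom to shift $D$ through its congruence class, one can guarantee $v_p(D)\le 1$ for the primes dividing the square--free part $m_2$; for primes $p\nmid 2Nm_2$ one uses that a nonzero half--integral weight form has infinitely many square--free (away from a fixed modulus) discriminants in any admissible class — this is the analogue, in the level-$N$ setting, of the square--free sieve used in \cite{bocherer2018fundamental}; and for $p = p_j\mid N$, the coefficient $m = m_1 m_2$ already has $v_{p_j}(m)\le 1$, and one allows the exponent $r_j$ to absorb whatever power of $p_j$ divides $n$, subject to $r_j\le\alpha_j$ — here again the newform structure (control of the $p_j$-part) keeps $r_j$ from exceeding $\alpha_j$. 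The $\mathrm{GL}(2,\mbb Z)$-inequivalence of the resulting infinitely many $T$ is automatic because $\det T \to \infty$ along the family.

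I expect the main obstacle to be the first lemma: producing a nonzero Fourier--Jacobi coefficient whose index has the controlled factorization $m_1 m_2$. Everything downstream is a fairly mechanical translation between Jacobi, vector--valued, and Siegel coefficients together with a square--free density statement; but arranging that the "visible" Fourier--Jacobi indices of a degree-$2$ newform of level $N$ are not all concentrated on bad (highly divisible or $N$-sharing) integers requires genuine input from the Atkin--Lehner newform theory of Section~\ref{sec:newforms}, and that is the step I would spend the most care on.
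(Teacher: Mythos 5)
Your outline has the right global shape (Siegel $\to$ Fourier--Jacobi $\to$ theta components $\to$ half--integral weight), but the two steps you flag as ``to be supplied'' are precisely where the content of the proof lives, and in both places the mechanism you gesture at is not the one that works. For the first step, you hope that Atkin--Lehner/newform theory will directly control the indices of the nonzero Fourier--Jacobi coefficients; it is not clear how orthogonality to the oldspace would do that. The paper's actual route is different and more concrete: by the Ibukiyama--Katsurada theorem a nonzero newform has a nonzero \emph{primitive} Fourier coefficient $A(F,T)$; a primitive binary quadratic form represents infinitely many primes, so one can choose $M\in\mathrm{GL}(2,\mbb Z)$ with $T_0=M^tTM$ having an odd prime $p$, $(p,N)=1$, in the lower--right corner; since $A(F,T_0)=\pm A(F,T)\neq 0$, the Fourier--Jacobi coefficient $\phi_p\in J_{k,p}^{cusp}(N)$ is nonzero. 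This lands you in the easiest case $m_1=1$, $m_2=p$ prime, so only Corollary \ref{cor:primitive} (not the full dichotomy of Theorem \ref{th:thetanz}) is needed, and your subsequent ``local analysis at primes dividing $m_2$'' is unnecessary.

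The second gap is the passage from $h_\mu\neq 0$ to the arithmetic shape of $4\det T$. Note first that $h_\mu$ is only modular on $\Gamma(4pN)$, not a component of a vector--valued form on $\mathrm{SL}(2,\mbb Z)$ --- the presence of the level is the whole difficulty. The statement you invoke (``infinitely many square--free discriminants in any admissible class'' for such forms) is not off the shelf: one must first show that the coefficients of $f(\tau)=h_\mu(4p\tau)\in S_{k-1/2}(\Gamma_1(16p^2N))$ are not entirely supported on multiples of the primes $p_j\mid N$. This is Theorem \ref{thm:halfint}/Proposition \ref{rednewg}: one repeatedly strips coefficients divisible by $p_j$ and, when the stripped form vanishes, replaces $f(\tau)$ by $f(\tau/p_j)$, which \emph{lowers} the level (Lemma \ref{lemma7analog}); if this could be iterated more than $\alpha_j$ times, Lemma \ref{lemsbsd} would force $f=0$. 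That level--lowering argument --- not ``newform structure'' --- is what produces the exponents $r_j\le\alpha_j$ in the conclusion, and it is why the theorem asserts square--freeness of $4\det T$ only away from $N$. Only after this reduction does the square--free nonvanishing theorem of B\"ocherer--Das (Theorem \ref{sqfsbsd}) apply. As written, your proposal would need both of these missing arguments before it could be called a proof.
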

In particular, when $N$ is an odd, square--free integer we see that $F$ is in fact determined by its fundamental Fourier coefficients. When $k$ is even, this was known from \cite{saha2012determination} and Theorem \ref{thm:fund} provides an alternative proof of this fact.

The proof of Theorem \ref{thm:fund} uses the quite useful technique of reducing the question to half--integral weight cusp forms by using the Jacobi forms as an intermediate  bridge. The passage to the case of Jacobi forms is aided by the Fourier--Jacobi expansion. To get to the desired half--integral case cusp form, instead of using the Eichler--Zagier map (which is defined in \cite{manickam2000shimura}), we directly use the theta components of the Jacobi forms as in \cite{bocherer2018fundamental}. Although this passage to half--integral weight forms is inspired by the calculations in \cite{bocherer2018fundamental}, we are in a somewhat more technically challenging situation due to the existence of level and thus requires a fresh set of calculations. The use of theta components instead of the Eichler--Zagier map (as in \cite{saha2012determination}) lets us prove the result for all weights $k>2$, $k$ odd included.

By virtue of Corollary \ref{cor:primitive}, the problem reduces to half--integral weight cusp forms on the congruence subgroup $\Gamma_1(4L)$ for some $L\ge 1$. We then make use of the following result to finish the proof of Theorem \ref{thm:fund}.
\begin{thm}\label{thm:halfint}
Let $\kappa\ge 5/2$ be a half--integer and $L=\prod_{i=1}^{t}p_i^{\alpha_i}$. Suppose $f\in S_\kappa(\Gamma_1(4L))$ is non--zero and $a(f,n)=0$ for all $(n,L_f)>1$, for an  even divisor $L_f$ of $4L$. Then there exist infinitely many odd and square--free integers $n$ such that 
\begin{enumerate}
\item For $p_i|L$, $(n,p_i)=1$.
\item $a(f,p_1^{i_1}...p_r^{i_r}n)\neq 0$, where $(p_j,L_f)=1$ and  $0\le i_j\le \alpha_j$ for $1\le j\le r$.
\end{enumerate}
%Moreover, for any $\epsilon>0$
%\begin{equation*}
%\#\{n\le X:\; n \text{ square--free }, (n,4L)=1, a(f,p_1^{i_1}...p_s^{i_s}n)\neq 0\}\gg_{f,\epsilon}X^{5/8-\epsilon}.
%\end{equation*}
\end{thm}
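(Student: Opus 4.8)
The plan is to strip off the primes dividing $L$ with the operators $U_{p_j}$, to descend the remaining square part with the good Hecke operators, and to deduce the infinitude of admissible $n$ from a Rankin--Selberg second moment estimate; throughout, $\kappa\ge 5/2$ is what guarantees the Hecke/Shimura theory and the coefficient bounds below. First put $V:=\{g\in S_\kappa(\Gamma_1(4L)) : a(g,n)=0 \text{ for all }(n,L_f)>1\}$, so $f\in V\setminus\{0\}$ and every $m$ with $a(f,m)\neq 0$ is odd (as $2\mid L_f$). One checks from the half--integral weight Hecke recursion that $V$ is stable under $T_{p^2}$ for each $p\nmid L_f$; as these operators commute and are diagonalizable (being normal for the Petersson product), $V$ has a simultaneous eigenbasis for $\{T_{p^2}:p\nmid 4L\}$, and replacing $f$ by a nonzero eigencomponent we may assume $f\mid T_{p^2}=\lambda_{p^2}f$ for $p\nmid 4L$, with some nebentypus $\chi$.

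\emph{Descent at good primes and peeling the bad ones.} For $p\nmid 4L$ the eigenform relation reads
\[
a(f,p^2n)=\Bigl(\lambda_{p^2}-\chi(p)\bigl(\tfrac{(-1)^{\kappa-1/2}n}{p}\bigr)p^{\kappa-3/2}\Bigr)a(f,n)-\chi(p)^2p^{2\kappa-2}a(f,n/p^2),
\]
so $a(f,p^2n)\neq 0$ forces $a(f,n)\neq 0$ or $a(f,n/p^2)\neq 0$; iterating over all $p\nmid 4L$, a nonzero coefficient $a(f,m)$ produces a nonzero $a(f,t\,d)$ where $t$ is the squarefree part of the prime-to-$4L$ part of $m$ and $d\mid(4L)^\infty$. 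To control $d$ we use that, for $p_j\mid L$, the operator $U_{p_j}$ (with $a(U_{p_j}g,n)=a(g,p_jn)$) preserves $S_\kappa(\Gamma_1(4L))$, the subspace $V$, and the property of being a $\{T_{p^2}:p\nmid 4L\}$-eigenform, together with the structural fact that a nonzero $g\in S_\kappa(\Gamma_1(4L))$ cannot be supported on $\bigcup_{j}p_j^{\alpha_j+1}\mathbb Z$ (the union over those $p_j\mid L$ with $(p_j,L_f)=1$), because $p_j^{\alpha_j}\Vert 4L$ — this is the level-lowering obstruction. Applying this to $f$, some $a(f,m)\neq 0$ has $v_{p_j}(m)\le\alpha_j$ for all such $j$; writing $m=n_0\prod_j p_j^{i_j}$ with $(n_0,L)=1$ and $0\le i_j\le\alpha_j$, and running the good-prime descent on the square part of $n_0$, we obtain an odd squarefree $n_0$ with $(n_0,L)=1$, a valid tuple $(i_j)$, and $g:=\prod_j U_{p_j}^{\,i_j}f$ with $g\neq 0$ and $a(g,n_0)=a(f,n_0\prod_j p_j^{i_j})\neq 0$.

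\emph{Infinitude via Rankin--Selberg.} Since $a(g,n)=a(f,n\prod_j p_j^{i_j})$, it suffices to show $a(g,n)\neq 0$ for infinitely many squarefree $n$ coprime to $4L$: the fixed tuple $(i_j)$ then witnesses the theorem for all of them. As $g$ is a nonzero $\{T_{p^2}:p\nmid 4L\}$-eigenform carrying a nonzero coefficient at $n_0$ coprime to $4L$, its prime-to-$4L$ primitive part is nonzero, so standard Rankin--Selberg theory gives $\sum_{(n,4L)=1,\ n\le X}|a(g,n)|^2\gg X^\kappa$. If only finitely many squarefree $n$ coprime to $4L$ had $a(g,n)\neq 0$, then by the good-prime descent the left side would range over finitely many square classes $tb^2$, and using $|a(g,tb^2)|\ll_\varepsilon b^{\kappa-1+\varepsilon}|a(g,t)|$ for $(b,4L)=1$ (from the Shimura correspondence and Deligne's bound) it would be $O_{g,\varepsilon}(X^{\kappa-1/2+\varepsilon})$ — a contradiction. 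Hence infinitely many squarefree $n$ coprime to $4L$, in particular odd and coprime to $L$, satisfy $a(g,n)\neq 0$.

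\emph{Main obstacle.} The decisive and technically heaviest point is the bad-prime step: the level-lowering obstruction for $\bigcup_j p_j^{\alpha_j+1}\mathbb Z$ and the assertions that $U_{p_j}$ preserves $S_\kappa(\Gamma_1(4L))$ together with its good-eigenform structure both rest on the delicate theory of half--integral weight forms and their Hecke operators at primes dividing the level; this is precisely where the presence of $L$ forces a fresh analysis beyond the level-one situation of \cite{bocherer2018fundamental}. The remaining ingredients — the good-prime Hecke recursion, the bookkeeping of square classes, and the second-moment estimate — are routine.
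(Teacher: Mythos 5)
Your proposal has the right overall architecture (peel the primes dividing the level, then count squarefree nonzero coefficients), but it leaves unproved exactly the step that carries the theorem's content. You assert as a ``structural fact'' that a nonzero $g\in S_\kappa(\Gamma_1(4L))$ cannot be supported on $\bigcup_j p_j^{\alpha_j+1}\mathbb Z$, and you derive from it the existence of a nonzero coefficient $a(f,m)$ with $v_{p_j}(m)\le\alpha_j$ for all $j$. This is not an off-the-shelf fact, and it is precisely what the paper has to build: one needs (i) a $\Gamma_1(4L)$ analogue of Serre--Stark's Lemma~7, namely that if $a(h,n)=0$ for all $(n,p)=1$ with $p\mid L$ odd then $h(\tau/p)\in S_\kappa(\Gamma_1(4L/p))$ --- proved in the paper by showing $U_pV_p$ commutes with the projections $\pi_\chi$ onto nebentypus components so that the known $\Gamma_0$ statement applies componentwise --- and (ii) an iteration, one prime at a time, in which one alternately sieves (which \emph{raises} the level by $p^2$) and applies the level-lowering lemma, terminating within $\alpha_j$ steps because once the level has lost all its factors of $p_j$ the vanishing lemma for primes away from the level would force the form to be zero. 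The union statement over several primes only comes out at the end of this bookkeeping (the paper's Proposition on the construction of $g$); quoting it as the input makes the argument circular at its core. Flagging it as ``the main obstacle'' does not discharge it.

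A second genuine gap: at the outset you replace $f$ by a nonzero $T_{p^2}$-eigencomponent. The theorem asserts nonvanishing of coefficients of $f$ itself, and $a(f_1,m)\neq 0$ for an eigencomponent $f_1$ of $f$ does not imply $a(f,m)\neq 0$ --- the other components may cancel at every $m$ you produce. Your Rankin--Selberg endgame (second moment $\gg X^\kappa$ versus the $O(X^{\kappa-1/2+\epsilon})$ bound on finitely many square classes via the Shimura relation) genuinely needs the eigenform hypothesis, so this cannot be waved away; the result the paper invokes at this stage (B\"ocherer--Das, Theorem~4.5, giving $\gg X^{5/8-\epsilon}$ squarefree nonvanishing coefficients for an \emph{arbitrary} form supported away from the level) is set up precisely to avoid this transfer problem. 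As written, your argument proves the statement only for simultaneous eigenforms in the subspace $V$, with the key bad-prime lemma assumed.
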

On our way to prove Theorem \ref{thm:halfint}, we prove certain auxiliary results (e.g., Lemma \ref{lemma7analog}) for half--integral forms on $\Gamma_1(4L)$. Even though these results are known for the group $\Gamma_0(4L)$ from \cite{serre1977modular}, it seems such results are not readily available in the literature for the group $\Gamma_1(4L)$. So we give a brief account of them in section \ref{sec:halfint}.

\subsection*{Acknowledgments}
The author would like to thank S. Das for valuable comments and ideas that led to improved results. The author is a Postdoctoral Fellow at the Harish--Chandra Research Institute(HRI) and would like to thank HRI for funding and providing excellent facilities.
\section{Notations and preliminaries}
For any $z\in \mbb C$ and $m\in \mbb Z$, we write $e(z):= e^{2\pi i z}$, $e^m(z):= e^{2\pi i mz}$ and $e_m(z):= e^{2\pi i z/m}$.
\subsection{Jacobi forms with level}\label{sec:Jacobi} In this article we encounter Jacobi forms with level. For more details about Jacobi forms for congruence subgroups, see \cite{kramer1986jacobiformen}. 

A holomorphic function $\phi:\mbb H\times \mbb C\rightarrow \mbb C$ is a Jacobi form of weight $k$, index $m$ and level $N$ if:
\begin{enumerate}
\item $\phi\left(\frac{a \tau+b}{c \tau+d}, \frac{z}{c \tau+d}\right)(c \tau + d)^{-k} e^m\left(\frac{-c z^{2}}{c \tau+d}\right)=\phi(\tau,z)$\qquad for all $\gamma=\begin{psmallmatrix}
a&b\\
c&d
\end{psmallmatrix}\in \Gamma_0(N)$.
\item $\phi(\tau, z+\lambda\tau+\mu)e^m(\lambda^2\tau+2\lambda z)=\phi(\tau,z)$ \q\;\qquad for all $(\lambda,\mu)\in \mbb Z^2$.
\item $\phi$ satisfies the boundedness condition at the \textit{cusps}.
\end{enumerate}

We denote by $J_{k,m}(N)$ the space of Jacobi forms of weight $k$, index $m$ and level $N$. As a consequence of condition (3) above, any $\phi\in J_{k,m}(N)$ has a  Fourier expansion of the form (at the cusp $i\infty$)
\begin{equation}
\phi(\tau,w)=\sum\nolimits_{4nm-r^2\ge 0}c(n,r)e(n\tau)e(rw).
\end{equation}
A similar Fourier expansion holds at other cusps too. If the Fourier coefficients corresponding to $4nm-r^2=0$ vanish at all the cusps, then we say that $\phi$ is a Jacobi cusp form. We denote the space of Jacobi cusp forms by $J_{k,m}^{cusp}(N)$.

Like in the case of full level (\cite{eichler1985theory}), any $\phi\in J_{k,m}(N)$ admits a \textit{theta expansion}: 
\begin{equation}
\phi(\tau,w)=\sum\nolimits_{\mu\md 2m}h_\mu(\tau)\cdot \Theta_{\mu, m}(\tau,w),
\end{equation}
where $\Theta_{\mu, m}$ is the theta series given by
\begin{equation}
\Theta_{\mu, m}(\tau,w)=\sum_{n\in \mbb Z}e\Big(\frac{(2mn-\mu)^2}{4m}\tau+(2mn-\mu)w\Big)
\end{equation}
and $h_\mu$ are modular forms of weight $k-1/2$ for the congruence subgroup $\Gamma(4mN)$ (see \cite{bocherer2018fundamental} for a proof) and have a Fourier expansion as below.
\begin{equation}\label{Fhmu}
h_\mu(\tau)=\sum\nolimits_{4mn-\mu^2\ge 0}c(n,\mu)e\Big(\frac{4mn-\mu^2}{4m}\tau\Big).
\end{equation}
As in \cite{bocherer2018fundamental}, we refer to the theta components corresponding to $(\mu,2mN)=1$ as \textit{primitive} theta components.

For $0\le\mu< 2m$, $\Theta_{\mu, m}$ and $h_\mu$ satisfy the following transformation properties (see \cite{kramer1986jacobiformen}). Let $\gamma=\begin{psmallmatrix}
a&b\\c&d
\end{psmallmatrix}\in \mrm{SL}(2, \mbb Z)$, then
\begin{equation}\label{thetatrans}
\Theta_{\mu, m}\left(\frac{a \tau+b}{c \tau+d}, \frac{z}{c \tau+d}\right)(c \tau + d)^{-1 / 2} e\left(\frac{-mc z^{2}}{c \tau+d}\right)=\sum_{\nu\md 2m} \varepsilon_{m}(\nu, \mu;\gamma) \Theta_{\nu, m}(\tau, z).
\end{equation}
Here we take the branch of square root having argument in $(-\pi/2,\pi/2]$. 

From \eqref{thetatrans} and the transformation properties of $\phi$ under $\Gamma_0(N)$ (see conditions (1) and (2) above) we get,
\begin{equation}\label{hnurel}
(-N\tau+1)^{-k+1/2}h_\mu\Big(\frac{\tau}{-N\tau+1}\Big)=\sum_{\nu\md 2m} \varepsilon_{m}(\nu, \mu;\gamma)h_\nu(\tau),
\end{equation}
where $\gamma=\begin{psmallmatrix}
1&0\\N&1
\end{psmallmatrix}$ and $\varepsilon_{m}(\nu, \mu;\gamma)$ is the generalized quadratic Gauss sum and is given by
\begin{equation}\label{epsilonmunu}
\varepsilon_{m}(\nu, \mu;\gamma)=\frac{1}{2m}\sum_{\eta\md 2m}e_{4m}\left(N\eta^2+2\eta(\nu-\mu)\right).
\end{equation}
For the sake of simplicity we write $\varepsilon_{m}(\nu, \mu;\gamma)=\varepsilon_m(\nu, \mu)$. 

\subsection{Generalized quadratic Gauss sums}\label{sec:gauss} The generalized quadratic Gauss sum is defined as
\begin{equation}\label{gausssum}
G(a,b,c):=\sum\nolimits_{l\md c}e_c(a l^2+b l),\q \text{ where } a,b,c\in \mbb Z.
\end{equation}
$G(a,b,c)$ has the following properties (see \cite{berndt1998gauss} for more details):
\begin{equation}
G(a, b, c_1c_2)=G(c_2a,b,c_1)G(c_1a,b,c_2), \text{ when } (c_1,c_2)=1.
\end{equation}
Moreover, $G(a,b,c)=0$ if $(a,c)>1$ and $(a,c)$ does not divide $b$ and when $(a,c)|b$, we have
\begin{equation}
G(a,b,c)=(a,c)G\left(a(a,c)^{-1}, b(a,c)^{-1},c(a,c)^{-1}\right)
\end{equation}
and for $(a,c)=1$
\begin{equation}\label{gaussvalues}
G(a,b,c)=
\begin{cases}
\epsilon_c \sqrt{c}\big(\frac{a}{c}\big)e_c(-\psi(a)b^2)&\mbox{ if } c\equiv 1\md 2, 4a\psi(a)\equiv 1\md c;\\
2G(2a,b,c/2)&\mbox{ if } c\equiv 2\md 4,b\equiv 1\md 2;\\
0&\mbox{ if } c\equiv2\md 4, b=0 ;\\
(1+i)\epsilon_a ^{-1}\sqrt{c}\big(\frac{a}{c}\big)&\mbox{ if } c\equiv 0\md 4, b=0 ;\\
0&\mbox{ if } c\equiv 0\md 4, b\equiv 1\md 2.
\end{cases}
\end{equation}
Here $\epsilon_m= 1$ or $i$ depending on $m\equiv 1 \md 4$ or $3\md 4$ respectively.

Splitting the congruence class $\md 4m$ into two congruence classes $\md 2m$ we see from \eqref{epsilonmunu} and \eqref{gausssum} that $\varepsilon_m(\mu,\nu)=0$ when $(N,4m)\nmid (\nu-\mu)$ and when $(N,4m)| (\nu-\mu)$, we have
\begin{equation}\label{epgauss}
\varepsilon_m(\mu,\nu)=\frac{1}{4m}G(N, 2(\nu-\mu),4m)=\frac{(N,4m)}{4m}G\left(\frac{N}{(N,4m)}, \frac{2(\nu-\mu)}{(N,4m)},\frac{4m}{(N,4m)}\right).
\end{equation}

\subsection{Siegel modular forms}\label{sec:siegl} The Siegel's upper half space of degree $2$ is given by
\begin{equation}
\mathbb{H}_2=\{Z\in M(2,\mathbb{C})|Z=Z^t, \mrm{Im}(Z)>0\}.
\end{equation}
The symplectic group $\mrm{Sp}(2,\mbb{R})$ acts on $\mbb H_2$ by $Z\mapsto \gamma\langle Z\rangle = (AZ+B)(CZ+D)^{-1}$. For any integer $k$, the stroke operator on functions $F:\mbb H_2\rightarrow \mbb C$ is defined as
\begin{equation}
(F|_k\gamma)(Z):=\text{ det}(CZ+D)^{-k}F(\gamma\langle Z\rangle).
\end{equation}
For any positive integer $N$, the congruence subgroup $\Gamma_0^{2}(N)\subset \mrm{Sp}(2,\mbb Z)$ is given by
\begin{equation}
\Gamma_0^{2}(N)=\left\{\begin{psmallmatrix}
A&B\\C&D
\end{psmallmatrix}\in \mathrm{Sp}(2, \mathbb{Z})\;|\;C\equiv 0\md N\right\}.
\end{equation} 

A Siegel modular form of degree $2$, weight $k$ and level $N$ is a holomorphic function $F:\mbb H_2\rightarrow\mbb C$ satisfying $F|_k\gamma=F$ for all $\gamma\in \Gamma_0^{2}(N)$. Any such form admits a Fourier expansion of the form
\begin{equation}\label{Fourexp}
F(Z)=\sum\nolimits_{T\in \Lambda_2}A(F,T)e(\mrm{Tr}(TZ)),
\end{equation}
where $\Lambda_2$ is the set of $2\times 2$ half--integral, symmetric, positive semi--definite matrices. Denote the subset of positive definite matrices by $\Lambda^+_2$. If $A(F,T)$ non--zero for only $T\in \Lambda^+_2$ we say that $F$ is a cusp form. The space of Siegel modular forms of degree $2$, weight $k$ and level $N$ is denoted by $M_k^2(N)$ and the subspace of cusp forms is denoted by $S_k^2(N)$. 

For a $T=\begin{psmallmatrix}
a&b/2\\
b/2&c
\end{psmallmatrix}\in \Lambda_2$, define the content $c(T)$ as below:
\begin{equation}
c(T)=\max\{d\in \mbb N \;| \;d^{-1}T \text{ is half--integral}\}.
\end{equation}
When $c(T)=1$, we say that $T$ is \textit{primitive}. If $-D=b^2-4ac (<0)$ is a fundamental discriminant, then we say that $T$ is \textit{fundamental} and the corresponding Fourier coefficient $A(F,T)$ is called a fundamental Fourier coefficient. The same analogy applies to primitive Fourier coefficients.

\subsection{Fourier--Jacobi expansion} For any $Z\in \mbb H_2$ consider the following decomposition:
\begin{equation}
Z=\begin{psmallmatrix}
\tau&w\\
w&z
\end{psmallmatrix}; \text{ with } w\in \mbb C, \tau, z\in \mbb H \text{ (the complex upper half plane)}.
\end{equation}
Any $F\in M_k^2(N)$ has a Fourier--Jacobi expansion with respect to the above decomposition as below.
\begin{equation}\label{FJexp}
F(Z)=\sum\nolimits_{m\ge0}\phi_m(\tau,w)e(mz).
\end{equation}
Here $\phi_m$ are the Jacobi forms of weight $k$, index $m$ and level $N$ defined in section \ref{sec:Jacobi}. If $F$ is a cusp form then $\phi_m$ are also cusp forms.

For a $\phi\in J_{k,m}(N)$ that appears in the Fourier--Jacobi expansion of a Siegel modular form $F$ as in \eqref{FJexp}, the Fourier coefficients of $\phi$ are given by the Fourier coefficients of $F$:
\begin{equation}
c_\phi(n,r)=A\left(F, \begin{psmallmatrix}
n&r/2\\
r/2&m
\end{psmallmatrix}\right).
\end{equation}
\subsection{A newspace of Siegel cusp forms}\label{sec:newforms} In this article we deal with the Atkin-Lehner type space of newforms introduced in \cite{ibukiyama2012atkin}. We give a brief description of the space below. 

Let $N$ be any positive integer and $d (>1)$, $M$ be positive integers such that $dM|N$. Consider any $F\in S_k^2(M)$, then $F(dZ)\in S_k^2(N)$. Denote by $S_k^{2,old}(N)$ the subspace of $S_k^2(N)$ spanned by 
\begin{equation}
\{F(dZ)|F\in S_k^2(M), dM|N\}.
\end{equation}
The orthogonal complement of $S_k^{2,old}(N)$ with respect to the Petersson inner product is denoted by $S_k^{2,new}(N)$ and this is the subspace of $S_k^2(N)$ that we refer to as the Atkin--Lehner type \textit{newspace} in this article.
\section{Theta components of Jacobi forms with level}
Let $N$ be an odd integer and $m_1$, $m_2$ be square--free integers such that $m_1|N$ and $(N,m_2)=1$. For any $\phi\in J_{k,m_1m_2}(N)$, let $h_\mu$ denote the \textit{theta} components of $\phi$ (see section \ref{sec:Jacobi}). We also have that $h_\mu$ is a modular form of weight $k-1/2$ for the congruence subgroup $\Gamma(4m_1m_2N)$ and the Fourier expansion of $h_\mu$ in \eqref{Fhmu} can be rewritten as
\begin{equation}\label{hmuf}
h_\mu(\tau)=\sum\nolimits_{n}c_\phi\Big(\frac{n+\mu^2}{4m_1m_2},\mu\Big)q^{n/4m_1m_2},
\end{equation}
where we follow the convention that $c_\phi\Big(\frac{n+\mu^2}{4m_1m_2},\mu\Big)=0$ if $n\not\equiv-\mu^2\md 4m_1m_2$.

For any $\phi\in J_{k,m_1m_2}(N)$, suppose that $h_\mu=0$ for all $(\mu,2m_1m_2)=1$. Then from \eqref{hnurel} we get that for $(\mu,2m_1m_2)=1$:
\begin{equation}\label{sumhmu=0}
\underset{(\nu,2m_1m_2)>1} {\sum\nolimits_{\nu\md 2m_1m_2}}\varepsilon_{m_1m_2}(\nu, \mu)h_\nu(\tau)=0.
\end{equation}
First we evaluate the Gauss sums $\varepsilon_{m_1m_2}(\nu,\mu)$. Since $(N,m_2)=1$ and $m_1|N$, we have that $(N,4m_1m_2)=m_1$. Thus with $N=Mm_1$, we see from \eqref{epgauss} that (for $m_1|(\nu-\mu)$)
\begin{equation}\label{epsilongauss}
\varepsilon_{m_1m_2}(\nu, \mu)=\frac{1}{4m_2}G\left(M, \frac{2(\nu-\mu)}{m_1},4m_2\right).
\end{equation}
\begin{lem}
If $m_1|(\nu-\mu)$, then
\begin{equation}\label{epsilonmunu-gauss}
\varepsilon_{m_1m_2}(\nu, \mu)=\frac{1}{2\sqrt{m_2}}(1+i)\epsilon_{M}^{-1}\left(\frac{M}{4m_2}\right)e_{4m_2}\left(\frac{-\overline{M}(\nu-\mu)^2}{m_1^2}\right),
\end{equation}
where $\overline{M}\in \mbb Z$ such that $M\overline{M}\equiv 1\md 4m_2$.
\end{lem}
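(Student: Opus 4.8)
\emph{Proof proposal.} The plan is to start from the identity \eqref{epsilongauss} already established for the case $m_1\mid(\nu-\mu)$, namely
\begin{equation*}
\varepsilon_{m_1m_2}(\nu,\mu)=\frac{1}{4m_2}\,G\!\left(M,\frac{2(\nu-\mu)}{m_1},4m_2\right),\qquad N=Mm_1,
\end{equation*}
and to evaluate this Gauss sum explicitly using the material of section \ref{sec:gauss}. First I would record the arithmetic set-up: since $N=Mm_1$ is odd, $M$ is odd, and since $(N,m_2)=1$ we get $(M,m_2)=1$, hence $(M,4m_2)=1$, so we are in the coprime regime of \eqref{gaussvalues}. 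The hypothesis $m_1\mid(\nu-\mu)$ is used here in an essential way: it guarantees that $b:=(\nu-\mu)/m_1$ is an integer, so the linear argument of the Gauss sum equals $2b$ and is in particular \emph{even}. This is what lets me bypass the fact that, for the modulus $4m_2\equiv 0\md 4$, the table \eqref{gaussvalues} only lists the subcases $b=0$ and $b$ odd, neither of which covers a nonzero even $b$ directly.

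The next step is to complete the square to remove the linear term. Let $\overline M\in\mbb Z$ satisfy $M\overline M\equiv 1\md{4m_2}$. Replacing the summation variable $l$ by $l-\overline M b$ in $G(M,2b,4m_2)=\sum_{l\md{4m_2}}e_{4m_2}(Ml^2+2bl)$ and using $M\overline M\equiv 1\md{4m_2}$, one finds
\begin{equation*}
G\!\left(M,2b,4m_2\right)=e_{4m_2}\!\left(-\overline M\,b^2\right)\,G(M,0,4m_2),
\end{equation*}
after which $G(M,0,4m_2)$ is exactly the $c\equiv 0\md 4$, $b=0$ entry of \eqref{gaussvalues}, giving $(1+i)\epsilon_M^{-1}\sqrt{4m_2}\big(\tfrac{M}{4m_2}\big)=2\sqrt{m_2}\,(1+i)\epsilon_M^{-1}\big(\tfrac{M}{4m_2}\big)$. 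Finally I would substitute back into \eqref{epsilongauss}, combine constants via $\frac{1}{4m_2}\cdot 2\sqrt{m_2}=\frac{1}{2\sqrt{m_2}}$, and rewrite $b^2=(\nu-\mu)^2/m_1^2$ to arrive at \eqref{epsilonmunu-gauss}.

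The step I expect to require the most care — the only genuinely non-routine point — is the completing-the-square manipulation over the even modulus $4m_2$: the usual device of multiplying through by the inverse of $4a$ is unavailable since $2$ is not invertible modulo $4m_2$. Instead one must use the evenness of the linear coefficient (which is precisely where $m_1\mid(\nu-\mu)$ enters) and check carefully that the cross term $2M\overline M b l$ produced by the shift is congruent to $2bl$ \emph{modulo $4m_2$} — this holds because $2bl(M\overline M-1)$ carries the extra factor $2$, so the error $M\overline M-1$, which is only guaranteed to vanish modulo $4m_2$, suffices. Once this is verified, the remaining identifications (reducing $M\overline M^2 b^2$ to $\overline M b^2$ modulo $4m_2$, and re-indexing the shifted sum as $G(M,0,4m_2)$) and the appeal to \eqref{gaussvalues} are immediate.
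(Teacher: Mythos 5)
Your proposal is correct and follows essentially the same route as the paper: substitute the expression \eqref{epsilongauss}, complete the square using $\overline M$ to reduce to $e_{4m_2}(-\overline M(\nu-\mu)^2/m_1^2)\,G(M,0,4m_2)$, and evaluate the latter via the $c\equiv 0\md 4$, $b=0$ entry of \eqref{gaussvalues}. Your write-up is in fact somewhat more careful than the paper's one-line computation — it makes explicit why $m_1\mid(\nu-\mu)$ is needed for the integer shift $l\mapsto l-\overline M b$, and it carries the correct minus sign through the intermediate step, where the paper's displayed chain has a sign typo.
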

When $m_1|(\nu-\mu)$, the Gauss sum on the RHS can be evaluated using \eqref{gaussvalues} as shown below:
\begin{equation}
\begin{split}
G(M,\tfrac{2(\nu-\mu)}{m_1},4m_2)&=\sum_{\eta\md 4m_2}e_{4m_2}\Big(M\eta^2+\tfrac{2\eta(\nu-\mu)}{m_1}\Big)
%&=\sum_{\eta\md 4m_2}e_{4m_2}\Big(M\big(\eta-\frac{\overline{M}(\nu-\mu)}{m_1}\big)^2-\frac{\overline{M}(\nu-\mu)^2}{m_1^2}\Big)\\
=e_{4m_2}\Big(\tfrac{\overline{M}(\nu-\mu)^2}{m_1^2}\Big)G(M,0,4m_2)\\
&=(1+i)\epsilon_{M}^{-1} \sqrt{4m_2}\Big(\tfrac{M}{4m_2}\Big)e_{4m_2}\Big(\tfrac{-\overline{M}(\nu-\mu)^2}{m_1^2}\Big).
\end{split}
\end{equation}
\begin{lem}\label{lem:ep}
If $(\nu,m_1)>1$, then $\varepsilon_{m_1m_2}(\nu,\mu)=0$.
\end{lem}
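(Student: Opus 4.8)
The plan is to reduce the statement to the vanishing criterion for the Gauss sums $\varepsilon_{m_1m_2}$ recorded in section \ref{sec:gauss}. Recall from there that $\varepsilon_{m_1m_2}(\nu,\mu)=0$ whenever $(N,4m_1m_2)\nmid(\nu-\mu)$, and that here $(N,4m_1m_2)=m_1$ because $N$ is odd, $m_1\mid N$ and $(N,m_2)=1$. Hence it suffices to show that $(\nu,m_1)>1$ forces $m_1\nmid(\nu-\mu)$, where $\mu$ runs over the range appearing in \eqref{sumhmu=0}, i.e. $(\mu,2m_1m_2)=1$, so in particular $(\mu,m_1)=1$.

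This remaining step I would carry out by a one-line gcd argument: if $m_1\mid(\nu-\mu)$ then $\nu\equiv\mu\pmod{m_1}$, hence $(\nu,m_1)=(\mu,m_1)=1$, contradicting $(\nu,m_1)>1$; equivalently, a prime $p\mid(\nu,m_1)$ would divide $m_1$, hence $\nu-\mu$, and also divide $\nu$, hence $\mu$, so that $p\mid(\mu,m_1)$, which is impossible. Therefore $m_1\nmid(\nu-\mu)$ and $\varepsilon_{m_1m_2}(\nu,\mu)=0$.

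I do not expect a genuine obstacle here: the lemma is a bookkeeping device, and its only subtle point is that it uses $(\mu,m_1)=1$, which is supplied by the running hypothesis of this section rather than by the stated hypotheses alone. Indeed, without that input the claim would fail, since for $\nu=\mu$ one has $m_1\mid(\nu-\mu)$ and the evaluation \eqref{epsilonmunu-gauss} gives $\varepsilon_{m_1m_2}(\mu,\mu)\neq0$. Combined with \eqref{epsilonmunu-gauss}, this lemma lets one discard every $\nu$ with $(\nu,m_1)>1$ from the sum \eqref{sumhmu=0}, reducing it to those $\nu$ with $(\nu,m_1)=1$ and $(\nu,2m_2)>1$, which is exactly the simplification needed on the way to Theorem~\ref{th:thetanz}.
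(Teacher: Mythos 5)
Your proposal is correct and is essentially identical to the paper's own proof: both reduce to the vanishing criterion $\varepsilon_{m_1m_2}(\nu,\mu)=0$ when $(N,4m_1m_2)=m_1\nmid(\nu-\mu)$, and then use the running hypothesis $(\mu,2m_1m_2)=1$ to show that $(\nu,m_1)>1$ forces $m_1\nmid(\nu-\mu)$. Your remark that the hypothesis $(\mu,m_1)=1$ is essential (and is supplied by the context rather than the lemma's statement) matches the paper's implicit use of it.
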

\begin{proof}
First, note that the RHS of \eqref{epsilongauss} is zero whenever $m_1\nmid (\nu-\mu)$ (see section \ref{sec:gauss}). On the other hand, we also have that $(\mu,2m_1m_2)=1$ and $(\nu,2m_1m_2)>1$. Now if $(\nu,m_1)>1$, since $(\mu,m_1)=1$, we have $m_1\nmid (\nu-\mu)$. Thus $\varepsilon_{m_1m_2}(\nu,\mu)=0$ in this case.
\end{proof}

Now coming back to \eqref{sumhmu=0}, using the transformation $\tau\mapsto\tau+t$ we get
\begin{equation}
\underset{(\nu,2m_1m_2)>1} {\sum\nolimits_{\nu\md 2m_1m_2}}\varepsilon_{m_1m_2}(\nu, \mu)e_{4m_1m_2}(-\nu^2t)h_\nu(\tau)=0.
\end{equation}
Using the linear independence of pairwise different characters (see \cite[pp. 15] {bocherer2018fundamental} for a detailed argument) we see that it is enough to consider $\nu$ with $(\nu,2m_1m_2)>1$ in square classes. That is, $\nu^2\equiv \nu_0^2 \mod 4m_1m_2$ for fixed $\nu_0 \mod 2m_1m_2$. Thus from \eqref{epsilonmunu} and Lemma \ref{lem:ep} we get:
\begin{equation}\label{hnucomb}
\underset{(\nu_0,m_1)=1} {\underset{\nu^2\equiv\nu_0^2\md 4m_1m_2}{\sum_{\nu\md 2m_1m_2}}}e_{4m_2}\Big(\frac{-\overline{M}(\nu-\mu)^2}{m_1^2}\Big)h_\nu(\tau)=0.
\end{equation}
For $(\nu_0,m_1)=1$ and any $l$ with $(l,2m_2)=1$, consider the matrix
\begin{equation}
M_{\nu_0}=\left(\varepsilon_{m_1m_2}(\nu,\mu)\right)_{\nu,\mu},
\end{equation}
where $\varepsilon_{m_1m_2}(\nu,\mu)=e_{4m_2}\left(\frac{l(\nu-\mu)^2}{m_1^2}\right)$ when $m_1|(\nu-\mu)$ and is zero otherwise, $(\mu,2m_1m_2)=1$ and $\nu$ runs over the set
\begin{equation}\label{setS}
S_{\nu_0}=\{\nu\md 2m_1m_2:  \nu^2\equiv\nu_0^2\md 4m_1m_2\}.
\end{equation}
We prove that the only possible solution to \eqref{hnucomb} is the trivial solution (i.e., $h_\nu=0$) by showing that the matrix $M_{\nu_0}$ has maximal rank. When $N=1$, this reduces to the arguments presented in \cite{bocherer2018fundamental}.
\begin{lem}
Let $(\nu_0,m_1)=1$. Then the matrix $M_{\nu_0}$ has maximal rank.
\end{lem}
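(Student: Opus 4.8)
The plan is to show that $M_{\nu_0}$ has maximal rank by exhibiting, for each fixed $\mu$ coprime to $2m_1m_2$, a distinguished entry in its column that cannot be cancelled by the other columns — in other words, to set up a triangularity (or "distinct leading exponent") argument on the phases $e_{4m_2}\!\big(l(\nu-\mu)^2/m_1^2\big)$. First I would analyze the set $S_{\nu_0}$ of \eqref{setS}. Since $m_1m_2$ is square--free and odd, the Chinese Remainder Theorem splits the congruence $\nu^2\equiv\nu_0^2\pmod{4m_1m_2}$ into a product of local conditions at $2$ and at each prime $p\mid m_1m_2$; at an odd prime $p\mid m_1m_2$ the condition $\nu^2\equiv\nu_0^2\pmod p$ has exactly the two solutions $\nu\equiv\pm\nu_0\pmod p$ (it is here that $(\nu_0,m_1)=1$ matters, guaranteeing $\nu_0\not\equiv0$, so the two roots are genuinely distinct and both are coprime to $m_1$). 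Hence $S_{\nu_0}$ is naturally parametrized by a sign vector $(\epsilon_p)_{p\mid m_1m_2}\in\{\pm1\}^{\omega(m_1m_2)}$, and the constraint $m_1\mid(\nu-\mu)$ forces $\epsilon_p$ to be the sign making $\epsilon_p\nu_0\equiv\mu\pmod p$ at every $p\mid m_1$; so the nonzero rows of column $\mu$ are indexed by the $2^{\omega(m_2)}$ sign choices over the primes dividing $m_2$.

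Next I would make the key reduction: factor the phase across the primes dividing $m_2$. Writing $m_1^2\overline{M}\cdot(\text{stuff})$ aside, the exponent $l(\nu-\mu)^2/m_1^2 \bmod 4m_2$ decomposes, via CRT on $4m_2 = 4\prod_{p\mid m_2}p$, into independent contributions from the prime $2$ and from each odd $p\mid m_2$. For the odd part, if $\nu$ corresponds to the sign vector $(\epsilon_p)$ then modulo $p$ one has $\nu\equiv\epsilon_p\nu_0$, hence $(\nu-\mu)\equiv\epsilon_p\nu_0-\mu\pmod p$, and I would record the quadratic Gauss-sum values $\big(\tfrac{\cdot}{p}\big)$ that arise. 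The upshot is that $M_{\nu_0}$ is, up to invertible diagonal scalings and a relabelling, a tensor product $\bigotimes_{p\mid m_2} M^{(p)}$ of small local $2\times 2$ (or possibly $1\times 1$, when $p\mid m_1$) matrices indexed by the local sign; maximal rank of $M_{\nu_0}$ then follows from maximal rank of each $M^{(p)}$, since rank is multiplicative under tensor products. This mirrors the square-class analysis of \cite{bocherer2018fundamental} but with the local factors now carrying the Gauss-sum data forced by the level $M$.

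Finally I would verify the local claim: each $2\times 2$ matrix $M^{(p)}$, whose entries are $e_p\!\big(l'(\epsilon\nu_0-\mu)^2\big)$ for $\epsilon\in\{\pm1\}$ and $\mu$ ranging over a set of residues coprime to $p$ hitting both values of $\big(\tfrac{\mu}{p}\big)$ (or, said differently, the two rows differ precisely because $(\nu_0-\mu)^2\not\equiv(-\nu_0-\mu)^2\pmod p$ as $4\nu_0\mu\not\equiv0$), is nonsingular — a $2\times2$ determinant computation of the shape $\det\begin{pmatrix} e_p(x)&e_p(y)\\ e_p(y)&e_p(x)\end{pmatrix}=e_p(2x)-e_p(2y)$, nonzero since $x\not\equiv y$. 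I expect the main obstacle to be bookkeeping rather than a conceptual hurdle: one must track the prime $2$ carefully (the factor $(1+i)\epsilon_M^{-1}$ and the $4$ in $4m_2$), make sure the diagonal rescalings pulled out are genuinely units mod $4m_2$, and confirm that the partition of $S_{\nu_0}$ by the $m_2$-signs is exactly the tensor index set after the rows forced by $m_1$ are fixed. Once the tensor factorization is set up cleanly, the rank statement is immediate; combined with \eqref{hnucomb} this forces all $h_\nu=0$, contradicting $\phi\neq0$, which is how this lemma feeds the proof of Theorem \ref{th:thetanz}.
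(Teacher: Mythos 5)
Your proposal follows essentially the same route as the paper: a CRT/tensor--product factorization of $M_{\nu_0}$ into local factors, with identity--type blocks at the primes dividing $m_1$ (forced by the condition $m_1\mid(\nu-\mu)$) and nonsingular $2\times2$ phase matrices at the primes dividing $m_2$, which is exactly the paper's decomposition $M_{\nu_0}=A\otimes B$ carried out inductively one prime at a time. The remaining differences are cosmetic (the paper uses columns $1,\mu$ rather than $\mu,-\mu$ in the local determinant), though you should note that when a prime $q\mid m_2$ divides $\nu_0$ the local congruence has only one root and the corresponding factor degenerates to a single nonzero row, a case your sign--vector parametrization of $S_{\nu_0}$ needs to accommodate.
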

\begin{proof}
Let $m_1=p_1p_2...p_r$ and $m_2=q_1q_2...q_s$. Write $t=r+s$. The set
\begin{equation}
\{\nu\md 2m_1m_2: \nu^2\equiv\nu_0^2\md 4m_1m_2\}
\end{equation}
has the cardinality $2^{t'}$, where $t'$ is the number of primes dividing $m_1m_2$ but not $\nu_0$. But the additional condition $(\nu_0,m_1)=1$ gives us that the cardinality of the above set is at least $2^r$. We prove by induction on $t$.

When $t=0$, $|S_{\nu_0}|=1$ and the lemma is trivially true. Consider the case $t=1$. Then either $r=1$ and $s=0$ or $r=0$ and $s=1$.
\begin{itemize}
\item[Case 1:] $r=1$ and $s=0$. In this case $|S_{\nu_0}|=2$ with $\nu_0$ and $-\nu_0$ being the solutions. Choose $\mu_1=\nu_0+2p_1$ ($\nu_0+p_1$ if $\nu_0$ is even) and $\mu_2=-\nu_0+2p_1$ ($-\nu_0+p_1$ if $\nu_0$ is even). Then clearly $(\mu_i,2p_1)=1$ for $i=1,2$. Clearly the diagonal matrix
\begin{equation}
\begin{pmatrix}
e_{4}\big(\frac{l(\nu_0-\mu_1)^2}{p_1^2}\big)& 0\\
0&e_{4}\big(\frac{l(-\nu_0-\mu_2)^2}{p_1^2}\big)
\end{pmatrix}
\end{equation}
has non--zero determinant. Thus $M_{\nu_0}$ has rank $2$.
\item[Case 2:]$r=0$ and $s=1$. If $q_1|\nu_0$, then $|S_{\nu_0}|=1$  and the matrix $M_{\nu_0}$ is a non--zero column and thus the lemma follows. If $(\nu_0,q_1)=1$, then $|S_{\nu_0}|=2$. Determinant of the matrix 
\begin{equation}
\begin{pmatrix}
e_{4q_1}\big(l(\nu-1)^2\big)& e_{4q_1}\big(l(\nu+1)^2\big)\\
e_{4q_1}\big(l(\nu-\mu)^2\big)&e_{4q_1}\big(l(\nu+\mu)^2\big)
\end{pmatrix}
\end{equation}
is given by $e_{4q_1}\left(l(2\nu^2+\mu^2+2\nu(\mu-1))\right)\left(1-e_{q_1}\left(l\nu(\mu-1)\right)\right)$. Choosing $\mu\md 2q_1$ with $(\mu,2q_1)=1$ and different from $1$, we see that this determinant is non-zero. Thus the matrix $M_{\nu_0}$ has rank $2$.
\end{itemize}
Now coming to the induction step $t\implies t+1$, first we assume that $r>0$ and write $m_1=m_3p$ and decompose $\nu_0\md 2m_1m_2$ as $\nu_0=2m_3m_2\nu_0'+p\nu_0''$ with $\nu_0'\md p$ and $\nu_0''\md 2m_3m_2$. Decompose $\nu$ and $\mu$ similarly. Note that 
\begin{enumerate}
\item $m_1|(\nu-\mu)$ iff $m_3|(\nu''-\mu'')$ and $p|(\nu'-\mu')$.
\item $\nu^2\equiv \nu_0^2\md 4m_1m_2$ iff $\nu'^2\equiv\nu_0'^2\md p$ and $\nu''^2\equiv\nu_0''^2\md 4m_3m_2$.
\end{enumerate}
Moreover, we also have
\begin{equation}
\frac{(\nu-\mu)^2}{4m_1^2m_2}=\frac{m_2(\nu'-\mu')^2}{p^2}+\frac{(\nu''-\mu'')^2}{4m_3^2m_2}\md \mbb Z.
\end{equation}
As $\nu'$ and $\nu''$ run over the sets $\{\nu'\md p:  \nu'^2\equiv\nu_0'^2\md p\}$ and $\{\nu''\md 2m_3m_2:  \nu''^2\equiv\nu_0''^2\md 4m_3m_2\}$ respectively, write $\varepsilon'(\nu',\mu')=e\left(\frac{lm_2(\nu'-\mu')^2}{p^2}\right)=1$ if $p|(\nu'-\mu')$ and is zero otherwise and $\varepsilon''(\nu'',\mu'')=e_{4m_2}\left(\frac{l(\nu''-\mu'')^2}{m_3^2}\right)$ if $m_3|(\nu''-\mu'')$ and zero otherwise. Let
\begin{equation}
A:=\left(\varepsilon'(\nu',\mu')\right)_{\nu',\mu'\md p}\q\text{ and }\q B:=\left(\varepsilon''(\nu'',\mu'')\right)_{\nu'',\mu''\md 2m_3m_2},
\end{equation}
From the induction hypothesis, $B$ has maximal rank. To see that $A$ also has maximal rank, first note that $(\nu_0',p)=1$ since $(\nu_0,m_1)=1$. Thus $\nu'^2\equiv \nu_0'^2\md p$ has two solutions (i.e., the rank of $A$ is at most $2$). Denote them by $\nu_1'=\nu_0'$ and $\nu_2'=-\nu_0$. Let $\mu_1'=\nu_1'+p$ and $\mu_2'=\nu_2'+p$. Clearly $(\mu_i',p)=1$ and $p|(\nu_i'-\mu_j')$ iff $i=j$. Thus $A$ has a sub-matrix of the form $\begin{psmallmatrix}
1&0\\0&1
\end{psmallmatrix}$. 
This shows that $A$ is indeed of rank $2$. The lemma is true in this case by noting that $M_{\nu_0}=A\otimes B$.

Now we turn to the case when $r=0$. Here $s=t+1$ and we write $m_2=m_4q$, where $m_4$ has $t=s-1$ prime factors. Note that in this case $m_1=1$. As in the previous case decompose $\nu_0\md 2m_2$ as $\nu_0=2m_4\nu_0'+q\nu_0''$ with $\nu_0'\md q$ and $\nu_0''\md 2m_4$. Decompose $\nu$ and $\mu$ similarly. Note that $\nu^2\equiv \nu_0^2\md 4m_2$ iff $\nu'^2\equiv\nu_0'^2\md q$ and $\nu''^2\equiv\nu_0''^2\md 4m_4$. We also have
\begin{equation}
\frac{(\nu-\mu)^2}{4m_2}=\frac{m_4(\nu'-\mu')^2}{q}+\frac{q(\nu''-\mu'')^2}{4m_4}\md \mbb Z.
\end{equation}
As before, we write 
\begin{equation}
A:=\left(\varepsilon'(\nu',\mu')\right)_{\nu',\mu'\md q}\q\text{ and }\q B:=\left(\varepsilon''(\nu'',\mu'')\right)_{\nu'',\mu''\md 2m_4},
\end{equation}
where $\varepsilon'(\nu',\mu')=e_q\left(lm_4(\nu'-\mu')^2\right)$ and $\varepsilon''(\nu'',\mu'')=e_{4m_4}\left(lq(\nu''-\mu'')^2\right)$. As in the previous case, $\nu'$ runs over the set $\{\nu'\md q:  \nu'^2\equiv\nu_0'^2\md q\}$ and $\nu''$ runs over the set $\{\nu''\md 2m_4:  \nu''^2\equiv\nu_0''^2\md 4m_4\}$.  

Since $(l,2m_4)=1$, the matrix $B$ has maximal rank by induction hypothesis. To see that $A$ has maximal rank, note that $\nu'^2\equiv\nu_0'^2\md q$ has one solution if $q|\nu_0'$ and in this case $A$ is a non--zero column and thus this is true.

If $(\nu_0',q)=1$, then $\nu'^2\equiv\nu_0'^2\md q$ has two solutions. The matrix 
\begin{equation}
\begin{pmatrix}
e_{q}\big(lm_4(\nu'-1)^2\big)& e_{q}\big(lm_4(\nu'+1)^2\big)\\
e_{q}\big(lm_4(\nu'-\mu)^2\big)&e_{q}\big(lm_4(\nu'+\mu)^2\big)
\end{pmatrix}
\end{equation}
has determinant $e_{q}\left(lm_4(2\nu'^2+\mu'^2+1+2\nu'(\mu'-1))\right)\left(1-e_{q}\left(4lm_4\nu'(\mu'-1)\right)\right)$. By choosing $\mu$ mod $q$ with $(\mu,q)=1$ and different from $1$, we see that the above determinant is non-zero. Thus the matrix $A$ has rank $2$. By noting that $M_{\nu_0}=A\otimes B$, the lemma is now complete.
\end{proof}
Now define the relation $\nu\sim\nu'$ iff $\nu^2\equiv\nu'^2\md 4m_1m_2$. This defines an equivalence relation among $\nu\md 2m_1m_2$ with $(\nu,m_1)=1$ and $(\nu,2m_1m_2)|2m_2$. Thus from the above discussion we see that $h_\nu=0$ for all such indices. This gives us the proof of Theorem \ref{th:thetanz}.

When $m_1=1$, we have the following corollary.
\begin{cor}\label{cor:primitive}
Let $N$ be an odd integer and $m$ be a square--free integer with $(N,m)=1$. For any non--zero $\phi\in J_{k,m}(N)$ there exists a $\mu \md 2m$ with $(\mu,2m)=1$ such that the theta component $h_\mu\neq 0$.
\end{cor}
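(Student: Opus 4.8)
The goal is to deduce Corollary \ref{cor:primitive} from Theorem \ref{th:thetanz} by specializing $m_1=1$, $m_2=m$. The plan is as follows.

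First I would observe that with $m_1=1$ the hypotheses of Theorem \ref{th:thetanz} read: $N$ odd, $m_2=m$ square--free, $m_1=1\mid N$ trivially, and $(N,m_2)=(N,m)=1$, which is exactly the hypothesis of the corollary. So Theorem \ref{th:thetanz} applies verbatim and yields a $\mu \md 2m$ with either $(\mu,2m)=1$ or $(\mu,2m)\nmid 2m$ for which $h_\mu\neq 0$.

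The only thing to check is that the second alternative is vacuous when $m_1=1$. Indeed, $(\mu,2m_1m_2)=(\mu,2m)$ is by definition a divisor of $2m=2m_2$, so the condition $(\mu,2m)\nmid 2m$ can never hold. Hence the $\mu$ provided by Theorem \ref{th:thetanz} must satisfy $(\mu,2m)=1$, i.e., it is a primitive index. This gives a nonzero primitive theta component $h_\mu$, which is the assertion of the corollary.

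There is essentially no obstacle here: the corollary is a direct logical specialization, and the substantive content (the maximal-rank argument for $M_{\nu_0}$ and the linear-independence-of-characters step) has already been carried out in the proof of Theorem \ref{th:thetanz}. The proof is therefore a one-line reduction, and I would present it as such, merely remarking that in the $m_1=1$ case the equivalence relation $\nu\sim\nu'$ collapses the non-primitive indices and forces $h_\nu=0$ for all $\nu$ with $(\nu,2m)>1$.
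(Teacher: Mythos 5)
Your proposal is correct and matches the paper's intended derivation: the corollary is exactly Theorem \ref{th:thetanz} specialized to $m_1=1$, $m_2=m$, and your observation that the alternative $(\mu,2m)\nmid 2m$ is vacuous (since a gcd with $2m$ always divides $2m$) is precisely why only the primitive case survives. Nothing further is needed.
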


\section{Half--integral weight forms for \texorpdfstring{$\Gamma_1(4L)$}{Gamma1L}}\label{sec:halfint}
For any positive integer $L$ and a positive half--integer $\kappa$ (i.e., $\kappa=l/2$ for some odd $l>0$) denote by $S_\kappa(\Gamma_1(4L))$ the space of half--integral weight cusp forms for the congruence subgroup $\Gamma_1(4L)$. Let $\chi$ be any Dirichlet character modulo $4L$, then the space $S_\kappa(\Gamma_1(4L))$ decomposes as below.
\begin{equation}\label{directdecomp}
S_\kappa(\Gamma_1(4L))=\oplus_{\chi\md 4L} S_\kappa(4L,\chi),
\end{equation}
where $S_\kappa(4L,\chi)$ denotes the space of half--integral weight cusp forms for the congruence subgroup $\Gamma_0(4L)$ and character $\chi$.

Let $L$ be as above and $L_f$ an even divisor of $4L$. Consider a non--zero $f\in S_\kappa(\Gamma_1(4L))$  such that $a(f,n)=0$ for all $(n,L_f)>1$. Write 
\begin{equation}\label{Ldecomp}
L=\prod\nolimits_{i=1}^{t}p_i^{\alpha_i}\;,\; L':=\prod\nolimits_{i=1}^{t}p_i\;,\;L_f'=\prod\nolimits_{p|L_f}p\;,\; L'=M_fL_f'.
\end{equation}

Our aim is to construct a new modular form $g$ such that $a(g,n)=0$ for all $(n,L')>1$. To proceed further, we need the following two results. The first one is an analogue of Lemma 7 in \cite{serre1977modular} for the congruence subgroup $\Gamma_1(4L)$. The arguments used here are similar in spirit to those used to prove a similar result in the integral weight case (see \cite{lang2012introduction}) with some additional care.
\begin{lem}\label{lemma7analog}
Let $h\in S_\kappa(\Gamma_1(4L))$ be non--zero. For an odd prime $p|L$, suppose $a(h,n)=0$ for all $(n,p)=1$, then $g(\tau):=h(\tau/p)$ belongs to $S_\kappa(\Gamma_1(4L/p))$.
\end{lem}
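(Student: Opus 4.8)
The plan is to show that $g(\tau) := h(\tau/p)$ is invariant under $\Gamma_1(4L/p)$ by combining two facts: first, that the hypothesis $a(h,n) = 0$ whenever $(n,p) = 1$ forces $h$ to be a function of $p\tau$ in a suitable sense, and second, that scaling the variable conjugates the relevant congruence subgroup in the expected way. Concretely, the Fourier expansion $h(\tau) = \sum_{p \mid n} a(h,n) q^{n}$ means $h(\tau) = \tilde{h}(p\tau)$ for some holomorphic $\tilde{h}$ on $\mbb H$ with $\tilde{h}(\tau) = \sum_{m} a(h,pm) q^{m}$; then $g(\tau) = h(\tau/p) = \tilde{h}(\tau)$, so $g$ is already holomorphic with a genuine $q$-expansion and is a cusp form at $i\infty$. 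The real content is the transformation law.

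The key computation is as follows. Let $\alpha = \begin{psmallmatrix} p & 0 \\ 0 & 1 \end{psmallmatrix}$, so that $g = h \mid \alpha$ in the weight-$\kappa$ half-integral sense (up to the standard automorphy factor $j(\alpha,\tau)$, which for an upper-triangular matrix with positive entries is an elementary root-of-unity-times-power-of-$p$ factor and causes no trouble). For $\gamma = \begin{psmallmatrix} a & b \\ c & d \end{psmallmatrix} \in \Gamma_1(4L/p)$ I would compute $\alpha \gamma \alpha^{-1} = \begin{psmallmatrix} a & pb \\ c/p & d \end{psmallmatrix}$; since $4L/p \mid c$ we get $4L \mid c/p \cdot p$ — more precisely $c = (4L/p) c'$ so $c/p = (4L/p^2) \cdot \ldots$ — wait, one must be careful: $c \equiv 0 \md 4L/p$ gives $c/p \equiv 0 \md 4L/p^2$ only if $p^2 \mid c$, which need not hold. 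The correct statement is that $\alpha \Gamma_1(4L/p) \alpha^{-1} \cap \mrm{SL}_2(\mbb Z)$ is a subgroup of $\Gamma_1(4L)$: indeed for the entry $c/p$ to be an integer we need $p \mid c$, and the subgroup of $\gamma \in \Gamma_1(4L/p)$ with $p \mid c$ (equivalently $4L \mid c$, since already $4L/p \mid c$) conjugates into $\Gamma_1(4L)$, where $h$ is invariant. So the plan is: (i) show the diamond/upper-triangular conjugation sends this finite-index subgroup $\Gamma' \le \Gamma_1(4L/p)$ into $\Gamma_1(4L)$, hence $g \mid \gamma = g$ for all $\gamma \in \Gamma'$; (ii) upgrade from $\Gamma'$ to all of $\Gamma_1(4L/p)$ using the extra input that $g$ has a $q$-expansion supported on all integers (not just multiples of $p$), which kills the obstruction coming from the coset representatives of $\Gamma'$ in $\Gamma_1(4L/p)$ — this is exactly where the hypothesis on the Fourier coefficients of $h$ re-enters, mirroring the integral-weight argument in \cite{lang2012introduction}.

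For step (ii) the mechanism is: $\Gamma_1(4L/p) = \bigsqcup_j \Gamma' \gamma_j$ with representatives that can be taken of the form $\gamma_j = \begin{psmallmatrix} 1 & 0 \\ 4L j/p & 1 \end{psmallmatrix}$ or translations $\begin{psmallmatrix} 1 & j \\ 0 & 1 \end{psmallmatrix}$ type elements; applying such a $\gamma_j$ to $g$ and using that $g \mid \gamma_j$ must again be holomorphic and $\Gamma'$-invariant, one sees the only way consistency with the $q$-expansion of $g$ holds is if $g$ is genuinely $\Gamma_1(4L/p)$-invariant — the vanishing-coefficient hypothesis guarantees $g$'s expansion has no "fractional" terms that the coset action would have to permute nontrivially. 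I expect the main obstacle to be bookkeeping the half-integral automorphy factor under conjugation: one must verify that $j(\alpha\gamma\alpha^{-1}, \tau)$ relates to $j(\gamma, \tau/p)$ by precisely the cocycle factor that makes $h \mid \alpha \mid (\alpha\gamma\alpha^{-1}) = h \mid \alpha\gamma = h \mid \gamma \mid \alpha = h \mid \alpha$, using Shimura's cocycle relations; and separately, checking the cusp conditions for $g$ at all cusps of $\Gamma_1(4L/p)$, which again follows from those of $h$ together with the coefficient hypothesis. The holomorphy and cusp behaviour are routine given the $q$-expansion description $g(\tau) = \sum_m a(h,pm) q^m$; the subtlety is purely in the group-theoretic conjugation plus the multiplier system.
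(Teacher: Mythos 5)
Your plan takes a genuinely different route from the paper's proof: the paper makes no direct conjugation argument at all, but instead decomposes $h=\sum_\chi h_\chi$ along Dirichlet characters via the projections $\pi_\chi$, checks that $U_pV_p$ commutes with the operators $\sigma_a^*$ so that each component $h_\chi\in S_\kappa(4L,\chi)$ inherits the hypothesis $a(h_\chi,n)=0$ for $(n,p)=1$, and then quotes Serre--Stark's Lemma 7 for $\Gamma_0(4L)$ with character to get $h_\chi(\tau/p)\in S_\kappa(4L/p,\chi\chi_p)$; summing over $\chi$ finishes. What you propose is essentially to reprove Serre--Stark's lemma from scratch in the $\Gamma_1$ setting, and as written it has genuine gaps at every point where real work is needed.

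Concretely: (1) Step (i) is not correct as stated. Since $g(\tau)=h(\tau/p)$, the relevant identity is $g|\gamma=h|(\beta\gamma\beta^{-1})|\beta$ with $\beta=\begin{psmallmatrix}1&0\\0&p\end{psmallmatrix}$, so the subgroup that conjugates to integral matrices is $\{\gamma: p\mid b\}$, and $\beta\gamma\beta^{-1}=\begin{psmallmatrix}a&b/p\\pc&d\end{psmallmatrix}$ has lower-left entry divisible by $4L$ but diagonal entries only $\equiv 1\pmod{4L/p}$. Thus the conjugate lands in $\Gamma_0(4L)\cap\Gamma_1(4L/p)$, which strictly contains $\Gamma_1(4L)$, and $h$ is \emph{not} assumed invariant under the extra diamond cosets --- repairing this already pushes you toward the character decomposition the paper uses. (Your parenthetical claim that $p\mid c$ is equivalent to $4L\mid c$ given $4L/p\mid c$ is also false whenever $p^2\mid L$.) (2) Step (ii), upgrading invariance from a finite-index subgroup to all of $\Gamma_1(4L/p)$, is exactly where the Fourier-coefficient hypothesis must do its work, and ``one sees the only way consistency holds'' is not an argument: one needs an explicit generation statement for $\Gamma_1(4L/p)$ in terms of $\begin{psmallmatrix}1&1\\0&1\end{psmallmatrix}$ and the subgroup from (i), together with an actual computation on coset representatives. (3) The multiplier is not an elementary factor that ``causes no trouble'': conjugating the theta multiplier by $\mathrm{diag}(1,p)$ produces precisely the quadratic character $\bigl(\tfrac{p}{\cdot}\bigr)$ that appears as the twist $\chi\chi_p$ in Serre--Stark, and one must verify that this twist is harmless on $\Gamma_1(4L/p)$. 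None of these obstacles is fatal to the strategy, but none is carried out, and together they constitute essentially the entire content of the lemma.
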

\begin{proof}
Consider the operators $V_p$ and $U_p$ given as below (see \cite{serre1977modular} for more about these operators).
\begin{equation*}
\begin{split}
&(f|V_p)(\tau)=f(p\tau)=p^{-\kappa/4}f|\left\{\begin{psmallmatrix}
p&0\\
0&1
\end{psmallmatrix}, p^{-1/4}\right\}=\sum a(f,n)q^{np},\\
&(f|U_p)(\tau)=p^{\kappa/4-1}\sum\nolimits_{j\text{ mod } p}f|\left\{\begin{psmallmatrix}
1&j\\
0&p
\end{psmallmatrix}, p^{1/4}\right\}=\sum a(f,np)q^{n}.
\end{split}
\end{equation*}
Then the condition on $h$ is equivalent to $h|(1-U_pV_p)=0$.

Next we consider the projection $\pi_\chi:S_\kappa(\Gamma_1(4L))\rightarrow S_\kappa(4L,\chi)$ which is given by 
\begin{equation}
\pi_\chi(f)=\frac{1}{\phi(4L)}\sum_{a\md 4L, (a,4L)=1}\overline{\chi(a)}f|\sigma_a^*,
\end{equation}
$\sigma_a\in \mrm{SL}_2(\mbb Z)$ such that $\sigma_a\equiv \begin{psmallmatrix}
a^{-1}&0\\
0&a
\end{psmallmatrix}\md 4L$. Also note that if $\sigma_a$ and $\sigma_a'$ are any two such matrices, then $\sigma_a\sigma_a'^{-1}\equiv \begin{psmallmatrix}
1&0\\0&1
\end{psmallmatrix}\md 4L$ and thus we have that $f|\sigma_a^*=f|\sigma_a'^*$. If we write $h=\sum h_\chi$, then we have that $h_\chi=\pi_\chi(h)$.

Now we claim that $h|(1-U_pV_p)=0$ would imply $h_\chi|(1-U_pV_p)=0$ for every $\chi\md 4L$. To see this, we show that actions of $U_pV_p$ and $\sigma_a^*$ commute. Let $\sigma_a=\begin{psmallmatrix}
a_1&b\\
c&d
\end{psmallmatrix}$ with $a_1\equiv a^{-1}\md 4L$,  $b\equiv c\equiv 0\md 4L$ and $d\equiv a\md 4L$. First note that
\begin{equation}
\left\{\begin{psmallmatrix}
p&j\\
0&p
\end{psmallmatrix},1\right\}\sigma_a^*=\sigma_a'^*\left\{\begin{psmallmatrix}
p&j'\\
0&p
\end{psmallmatrix},1\right\},
\end{equation}
where $\sigma_a'=\begin{psmallmatrix}
a_1+jc&(b+jd)-j'(a_1+jc)\\
c&d-j'c
\end{psmallmatrix}$. By choosing $j'\equiv a_1^{-1}dj\md 4L$, we have $f|(U_pV_p)|\sigma_a^*=f|\sigma_a'^*|(U_pV_p)$. Thus
\begin{equation}
h_\chi|(1-U_pV_p)=\frac{1}{\phi(4L)}\sum_{a\md 4L, (a,4L)=1}\overline{\chi(a)}h|(1-U_pV_p)|\sigma_a^*=0.
\end{equation}
Thus we get that for every $\chi\md 4L$, $h_\chi\in S_\kappa(4L,\chi)$ such that $a(h_\chi,n)=0$ for all $(n,p)=1$.

Now $g(\tau)=h(\tau/p)=\sum h_\chi(\tau/p)$. If $h_\chi$ is non--zero, then from \cite[Lemma 7]{serre1977modular}, $h_\chi(\tau/p)\in S_\kappa(4L/p,\chi\chi_p)$. Thus we see that $g\in S_\kappa(\Gamma_1(4L/p))$.
\end{proof}
The second result is the following lemma from \cite[Lemma 4.2]{bocherer2018fundamental}.
\begin{lem}\label{lemsbsd}
Let $h\in S_\kappa(\Gamma_1(4L))$. Suppose $a(h,n)=0$ for all $(n,p)=1$ for an odd prime $p\nmid L$, then $h=0$.
\end{lem}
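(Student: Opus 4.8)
The plan is to turn the coefficient hypothesis into an invariance property of $h$ strong enough to force $h\equiv 0$. Writing $h(\tau)=\sum_{n}a(h,n)q^{n}$, the hypothesis $a(h,n)=0$ for all $(n,p)=1$ says exactly that $h$ is supported on multiples of $p$; hence $h(\tau+1/p)=\sum_{n}a(h,n)q^{n}e(n/p)=h(\tau)$, since every surviving term has $p|n$ and then $e(n/p)=1$. Thus, besides being modular of weight $\kappa$ for $\Gamma_1(4L)$, the function $h$ is invariant, with trivial automorphy factor, under the translation $T\colon\tau\mapsto\tau+1/p$, that is, under $\begin{psmallmatrix}1&1/p\\0&1\end{psmallmatrix}$.

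Next I would pass to $\Phi(\tau):=(\mathrm{Im}\,\tau)^{\kappa/2}\,|h(\tau)|$. Since the half--integral automorphy factor of $h$ under $\Gamma_1(4L)$ has absolute value $|c\tau+d|^{\kappa}$, the function $\Phi$ is $\Gamma_1(4L)$--invariant; by the previous step it is also $T$--invariant, hence invariant under the group $G:=\langle\Gamma_1(4L),T\rangle\subseteq\mathrm{PSL}_2(\mathbb R)$. Moreover $\Phi$ is continuous on $\mathbb H$, and since $h$ is a cusp form $\Phi(\tau)\to 0$ as $\tau$ approaches any cusp; in particular $\Phi$ is bounded. The crucial point is that, because $(p,4L)=1$, the group $G$ is \emph{not} discrete. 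Granting this: a non--discrete subgroup of $\mathrm{PSL}_2(\mathbb R)$ containing a lattice (here $\Gamma_1(4L)$) has closure equal to all of $\mathrm{PSL}_2(\mathbb R)$, hence dense orbits on $\mathbb H$, so the continuous $G$--invariant function $\Phi$ is constant; and since $\Phi$ tends to $0$ at a cusp this constant is $0$. Hence $h\equiv 0$.

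The step I expect to be the main obstacle is proving that $G$ is non--discrete. The clean route: were $G$ discrete, then, containing the lattice $\Gamma_1(4L)$, it would itself be a lattice, necessarily containing $\Gamma_1(4L)$ with finite index, and so would admit a finite--index normal core $\Gamma^{\circ}\le\Gamma_1(4L)$ normalized by $T$. Since $(p,4L)=1$ the group $\Gamma_1(4L)$ surjects onto $\mathrm{SL}_2(\mathbb Z/p^{k})$ for every $k$, whence any normal subgroup of $\Gamma_1(4L)$ contained in $\Gamma_0(p^{k})$ is automatically $\equiv\pm I\pmod{p^{k}}$ (for $p$ odd); feeding this into the explicit formula for $T\gamma T^{-1}$ — which forces $p|c$, and then, on comparing diagonal entries, $p^{k+1}|c$ as soon as $p^{k}|c$ and $\gamma\equiv\pm I\pmod{p^{k}}$ — one obtains $\Gamma^{\circ}\subseteq\Gamma(p^{k})$ for all $k$, so $\Gamma^{\circ}$ is trivial, contradicting finite index. (A crude two--generator Sanov--type argument using $T$ together with $\begin{psmallmatrix}1&0\\4L&1\end{psmallmatrix}$ already disposes of the range $p>2L$, but the normal--core argument covers all $p$.) Alternatively, one may simply invoke \cite[Lemma 4.2]{bocherer2018fundamental}, which is the statement for $S_\kappa(4L,\chi)$; the $\Gamma_1(4L)$ case then follows from the decomposition \eqref{directdecomp}, once one checks — as in the proof of Lemma \ref{lemma7analog}, choosing representatives $\sigma_{a}\equiv I\pmod{p}$ — that $U_p$ and $V_p$ commute with the projections $\pi_{\chi}$, so that the condition $h|(1-U_pV_p)=0$ descends to each $\chi$--component.
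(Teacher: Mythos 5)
Your argument is correct, but you should know that the paper supplies no proof of this lemma at all: it is simply quoted from \cite[Lemma 4.2]{bocherer2018fundamental}, and in that reference it is already stated for $S_\kappa(\Gamma_1(4L))$, so not even the reduction to the spaces $S_\kappa(4L,\chi)$ via \eqref{directdecomp} is needed (your closing alternative --- project by $\pi_\chi$ after checking that $U_pV_p$ commutes with the $\sigma_a^*$ --- is precisely the manoeuvre the paper performs in its proof of Lemma \ref{lemma7analog}, and it would work here as well). What you give instead is the classical self-contained argument: the support condition makes $h$ invariant under $\tau\mapsto\tau+1/p$, so $(\mathrm{Im}\,\tau)^{\kappa/2}|h(\tau)|$ is a continuous function invariant under $G=\langle\Gamma_1(4L),\begin{psmallmatrix}1&1/p\\0&1\end{psmallmatrix}\rangle$ and vanishing at the cusps, and non-discreteness of $G$ --- which is exactly where $p\nmid 4L$ enters, through the surjection $\Gamma_1(4L)\twoheadrightarrow\mathrm{SL}_2(\mathbb Z/p^k\mathbb Z)$ --- forces it to be identically zero, since a non-discrete subgroup containing a lattice is dense in $\mathrm{PSL}_2(\mathbb R)$. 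The two steps you compress are both fine and worth a line each if written up: in the inductive step the signs $\gamma\equiv\epsilon I$ and $T\gamma T^{-1}\equiv\epsilon' I\pmod{p^k}$ agree because conjugation preserves the trace and $p^k>2$ is odd, after which the $(1,1)$-entry $a+c/p$ yields $p^{k+1}\mid c$; and a normal subgroup of $\mathrm{SL}_2(\mathbb Z/p^{k}\mathbb Z)$ inside the Borel lies in $\{\pm I\}$, so the normal core sits in $\bigcap_k\{\pm I\}\Gamma(p^k)=\{\pm I\}$, which (as $-I\notin\Gamma_1(4L)$) contradicts finite index. In short, your route buys a proof independent of the cited reference, at the cost of invoking the density fact for non-discrete subgroups of $\mathrm{PSL}_2(\mathbb R)$; the paper's route buys brevity by outsourcing the lemma entirely.
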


We now construct the new cusp form $g$ using Lemma \ref{lemma7analog} and \ref{lemsbsd} and an inductive argument. These arguments are similar in essence to the ones used in \cite{saha2013siegel} for the congruence subgroup $\Gamma_0(4N)$ and in \cite{anamby2019distinguishing} for the integral weight cusp forms.

Let $M_f,L_f'$ be the co--prime square--free integers as in \eqref{Ldecomp}. Write $M_f=p_1p_2....p_r$. 
\begin{enumerate}
\item Consider
\begin{equation*}
g_0(\tau)=\sum\nolimits_{(n,L_f')=1}a(f,n)q^n.
\end{equation*}
Clearly $g_0\neq 0$ and $g_0\in S_\kappa(\Gamma_1(4LL_f'^2))$. We construct $g_1$ such that $a(g_1,n)=0$ for $(n,p_1L_f')>1$. 
\begin{enumerate}
\item  Let $g_{1,0}(\tau)=\sum\nolimits_{(n,p_1)=1}a(g_0,n)q^n.$  Then $g_{1,0}\in S_\kappa(\Gamma_1(4LL_f'^2p_1^2)$.% and if $g_{1,0}\neq 0$, we set $g_1=g_{1,0}$.\\
\item  If $g_{1,0}=0$, then we set $g_{1,0}'(\tau)=g_0(\tau/p_1)$. Clearly $g_{1,0}'\neq 0$, since $g_0\neq 0$. Since $g_{1,0}=0$, we have from Lemma \ref{lemma7analog} that $g_{1,0}'\in S_\kappa(\Gamma_1(4LL_f'^2/p_1))$. Now we set $g_{1,1}(\tau)=\sum\nolimits_{(n,p_1)=1}a(g_{1,0}',n)q^n$ and we have $g_{1,1}\in S_\kappa(\Gamma_1(4LL_f'^2p_1)$.
\item  Now suppose $g_{1,i}\in S_\kappa(\Gamma_1(4LL_f'^2p_1^{2-i})$ has been constructed as in step (b) for some $0\le i\le \alpha_1$. If $g_{1,i}=0$, then we go back to step (b) and construct $g_{1,i+1}\in S_\kappa(\Gamma_1(4LL_f'^2p_1^{2-(i+1)})$ with $g_{1,0}$ and $g_0$ replaced by $g_{1,i}$ and $g'{1,i-1}$ respectively. The first $i_0$ for which $g_{1,i_0}\neq 0$, we set $g_1=g_{1,i_0}$.
\end{enumerate}
If all of $g_{1,0},\cdots, g_{1,\alpha_1}$ are zero, since $g'_{1,\alpha_1-1}\in S_\kappa(\Gamma_1(4LL_f'^2/p_1^{\alpha_1}))$ and $p_1^{\alpha_1}||L$, we see from the Fourier coefficients of $g_{1,\alpha_1}$ and Lemma \ref{lemsbsd} that $g'_{1,\alpha_1-1}=0$. This gives us that $g_0=0$, a clear contradiction. Hence, the process must stop for some $1\le i\le \alpha_1$. Thus we get $g_1\in S_\kappa(\Gamma_1(4LL_f'^2p_1^{2-i_0})$ for some $1\le i_0\le \alpha_1$ such that $a(g_1,n)=0$ for $(n,p_1L_f')>1$ and $a(g_1,n)=a(g_0,p^{i_0}n)$.
\item Now suppose $g_j\in S_\kappa(\Gamma_1(4LL_f'^2p_1^{2-i_1}p_2^{2-i_2}...p_j^{2-i_j}))$ has been constructed for some $1\le j< r$, then we construct $g_{j+1}$ as in step (1) with $g_0$, $L_f'$ and $M_f$ replaced by $g_j$, $L_f'p_1p_2...p_j$ and $p_{j+1}...p_r$ respectively.

\item Finally, we set $g=g_r$ and we have $g\in S_\kappa(\Gamma_1(4LL_f'^2p_1^{2-i_1}p_2^{2-i_2}...p_r^{2-i_r}))$. Moreover, $a(g,n)=0$ if $(n,L_f'p_1...p_r)>1$ and $a(g,n)=a(f,p_1^{i_1}...p_r^{i_r}n)$ otherwise.
\end{enumerate}
We summarize the discussion in the following proposition.
\begin{prop}\label{rednewg}
Let $L=\prod_{i=1}^{t}p_i^{\alpha_i}$ and $L_f$ be an even divisor of $4L$. Suppose $f\in S_\kappa(\Gamma_1(4L))$ is non--zero and $a(f,n)=0$ for all $(n,L_f)>1$. Then there exists a $g\in S_\kappa(\Gamma_1(4LL_f'^2p_1^{2-i_1}...p_r^{2-i_r}))$ with the following properties.
\begin{enumerate}
\item $a(g,n)=0$ whenever $(n,4L)>1$.
\item $a(g,n)=a(f,p_1^{i_1}...p_r^{i_r}n)$ when $(n,4L)=1$.
\item $0\le i_j\le \alpha_j$ for $1\le j\le r$.
\end{enumerate}
\end{prop}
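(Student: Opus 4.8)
The plan is to prove the proposition by an inductive ``peeling'' procedure, removing one prime at a time with the operators $U_p,V_p$ from the proof of Lemma~\ref{lemma7analog} and then invoking Lemmas~\ref{lemma7analog} and~\ref{lemsbsd}. Two elementary facts are used repeatedly. The first is the sieving identity
\[
  f-f|U_pV_p=\sum\nolimits_{(n,p)=1}a(f,n)q^{n},
\]
where $U_p$ does not raise the level and $f|V_p$ lies in $S_\kappa(\Gamma_1(4Lp^2))$ when $f\in S_\kappa(\Gamma_1(4L))$, so sieving out $p$ carries $S_\kappa(\Gamma_1(4L))$ into $S_\kappa(\Gamma_1(4Lp^2))$. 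The second is the ``division'' step of Lemma~\ref{lemma7analog}: when the coefficients of a form coprime to an odd prime $p$ all vanish and $p$ divides the level parameter, $f(\tau/p)$ is again a cusp form, now of level $4L/p$, with $n$th coefficient $a(f,pn)$.

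I would first dispose of the primes dividing $L_f'$. Put $g_0(\tau):=\sum_{(n,L_f')=1}a(f,n)q^{n}$, obtained by sieving $f$ at each prime dividing $L_f'$ in turn, so $g_0\in S_\kappa(\Gamma_1(4LL_f'^2))$. Since by hypothesis $a(f,n)=0$ whenever $(n,L_f)>1$, i.e.\ whenever $(n,L_f')>1$, every coefficient discarded in forming $g_0$ was already zero, so $g_0=f\neq0$ and $a(g_0,n)=a(f,n)$ for all $n$ with $(n,L_f')=1$.

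Next I would peel off the primes $p_1,\dots,p_r$ of $M_f$ in succession, maintaining the inductive hypothesis that $g_{j-1}$ is a non-zero element of $S_\kappa(\Gamma_1(4LL_f'^2\,p_1^{2-i_1}\cdots p_{j-1}^{2-i_{j-1}}))$ with $a(g_{j-1},n)=0$ whenever $(n,L_f'p_1\cdots p_{j-1})>1$ and $a(g_{j-1},n)=a(g_0,p_1^{i_1}\cdots p_{j-1}^{i_{j-1}}n)$ otherwise (for $j=1$ this is $g_0$ itself). To build $g_j$ I would run the following loop at $p=p_j$, starting from $g_{j-1}$: sieve the current form; if the sieve is non-zero, stop and let $g_j$ be that sieve, recording the number $i_j$ of divisions performed so far; if the sieve is zero, then all coefficients coprime to $p_j$ vanish, so by Lemma~\ref{lemma7analog} pass to the $(\tau/p_j)$-rescaling and repeat. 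After $i$ rounds without success the working form is $g_{j-1}(\tau/p_j^i)$, of level $4LL_f'^2 p_1^{2-i_1}\cdots p_{j-1}^{2-i_{j-1}}/p_j^i$ and with $n$th coefficient $a(g_{j-1},p_j^i n)$, so its sieve lands in $S_\kappa(\Gamma_1(4LL_f'^2 p_1^{2-i_1}\cdots p_{j-1}^{2-i_{j-1}}p_j^{2-i}))$ with coefficients $a(g_0,p_1^{i_1}\cdots p_{j-1}^{i_{j-1}}p_j^i n)$ for $(n,p_j)=1$ and $0$ otherwise. Since $p_j$ is coprime to $L_f'p_1\cdots p_{j-1}$, the vanishing on $n$ with $(n,L_f'p_1\cdots p_{j-1})>1$ survives the rescaling, and as $g_j$ is a sieve at $p_j$ it also vanishes on $n$ with $p_j|n$; hence $g_j$ vanishes whenever $(n,L_f'p_1\cdots p_j)>1$, preserving the hypothesis.

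The one genuinely non-formal point, and where I expect the actual work to lie (the rest being bookkeeping of levels and coefficients), is \emph{termination}: the loop at $p_j$ must succeed after at most $\alpha_j$ divisions. Here one uses $p_j^{\alpha_j}||L$ and $p_j\nmid 4L_f'$, so the $p_j$-valuation of the level of $g_{j-1}(\tau/p_j^i)$ is $\alpha_j-i$; this is positive for $i<\alpha_j$, which is exactly what makes each of the first $\alpha_j$ division steps legitimate via Lemma~\ref{lemma7analog}, and after exactly $\alpha_j$ divisions the working form $g_{j-1}(\tau/p_j^{\alpha_j})$ lies in a level coprime to $p_j$. If its sieve were still zero, Lemma~\ref{lemsbsd} would force $g_{j-1}(\tau/p_j^{\alpha_j})=0$, hence $g_{j-1}=0$, hence $g_0=0$, contradicting $g_0\neq0$; so the loop succeeds with some $0\le i_j\le\alpha_j$. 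Setting $g:=g_r$ then gives $g\in S_\kappa(\Gamma_1(4LL_f'^2 p_1^{2-i_1}\cdots p_r^{2-i_r}))$ with $a(g,n)=a(g_0,p_1^{i_1}\cdots p_r^{i_r}n)$ for $(n,L_f'p_1\cdots p_r)=1$ and $0$ otherwise. Finally, $L_f'p_1\cdots p_r$ and $4L$ have the same prime divisors (the prime $2$ divides $L_f'$ since $L_f$ is even, and every odd prime of $L$ either divides $L_f$, hence $L_f'$, or divides $M_f$, hence is one of the $p_j$), so $(n,L_f'p_1\cdots p_r)>1$ holds exactly when $(n,4L)>1$; and when $(n,4L)=1$ the index $p_1^{i_1}\cdots p_r^{i_r}n$ is coprime to $L_f'$, whence $a(g_0,p_1^{i_1}\cdots p_r^{i_r}n)=a(f,p_1^{i_1}\cdots p_r^{i_r}n)$. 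This yields the three asserted properties of $g$.
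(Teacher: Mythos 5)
Your proposal is correct and follows essentially the same route as the paper: the same initial sieve $g_0$ at the primes of $L_f'$, the same sieve-then-divide loop at each $p_j$ driven by Lemma~\ref{lemma7analog}, and the same termination argument via $p_j^{\alpha_j}\|L$ and Lemma~\ref{lemsbsd}. The only additions are minor clarifications (e.g.\ that $g_0=f$ as a $q$-series, and the explicit check that $L_f'p_1\cdots p_r$ and $4L$ share the same prime divisors), which the paper leaves implicit.
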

Now we use the following result for half--integral weight cusp forms from \cite[Theorem 4.5]{bocherer2018fundamental}.
\begin{thma}\label{sqfsbsd}
Let $\kappa\ge 5/2$ be a half--integer and $L\ge1$ be an integer. Suppose $g\in S_\kappa(\Gamma_1(4L))$ is such that $a(g,n)=0$ whenever $(n,4L)>1$. Then there exists infinitely many odd and square-free integers $n$ such that $a(g,n)\neq 0$. More precisely, for any $\epsilon>0$,
$\#\{n\le X, n \text{ square--free }:\; a(g,n)\neq 0\}\gg_{g,\epsilon}X^{5/8-\epsilon}.$
\end{thma}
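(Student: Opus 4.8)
The final displayed statement, Theorem~\ref{sqfsbsd}, is quoted from \cite[Theorem~4.5]{bocherer2018fundamental}, so it is not reproved in the paper; I will instead indicate the route by which one establishes it. The hypothesis $\kappa\ge 5/2$ is exactly what keeps $g$ outside the span of unary theta series (which live only in weights $1/2$ and $3/2$), so the Shimura correspondence applies without degeneration. The plan is, after the standard reductions (passing to Hecke eigencomponents, and to the plus space since $L$ is odd), to attach to $g$ one or more \emph{non--zero} holomorphic cusp forms $F$ of even integral weight $2\kappa-1\ge 4$, and thereby to turn a statement about the square--free Fourier coefficients of $g$ into a statement about quadratic twists of fixed newforms.

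The mechanism is the Waldspurger--Kohnen--Zagier formula: for fundamental discriminants $D$ with $(-1)^{\kappa-1/2}D>0$ and $(D,4L)=1$, the quantity $|a(g,|D|)|^{2}$ equals, up to an explicit non--zero constant and an archimedean factor, $|D|^{\kappa-1}L(F\otimes\chi_D,\kappa-\tfrac12)/\langle F,F\rangle$; hence $a(g,|D|)\ne 0$ is governed by the non--vanishing of the twisted central value $L(F\otimes\chi_D,\kappa-\tfrac12)$, once one has accounted for the finitely many eigencomponents --- a linear--independence argument being needed to rule out systematic cancellation among them at these arguments. I would then restrict to $D$ of the required sign with $|D|$ odd, square--free and coprime to $L$ (so $D\equiv 1\pmod 4$, a genuine fundamental discriminant): for such $D$ the integer $n=|D|$ is odd, square--free and coprime to $4L$, so the hypothesis $a(g,n)=0$ for $(n,4L)>1$ does not constrain it, and Theorem~\ref{sqfsbsd} is reduced to counting such $D$ with $|D|\le X$ for which the twisted central value is non--zero.

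That count is the crux, and the exponent $5/8$ is precisely the strength of the quantitative non--vanishing result one has available: a mollified moment estimate for the family $\{L(F\otimes\chi_D,\cdot)\}$ over fundamental discriminants of the required sign with $|D|\le X$ odd, square--free and coprime to $L$ produces $\gg_{F,\epsilon}X^{5/8-\epsilon}$ of them for which the value is non--zero. (For the weaker conclusion of merely infinitely many such $n$, a Rankin--Selberg lower bound showing that $\sum_{n\le X}|a(g,n)|^{2}$ restricted to square--free $n$ is $\gg X^{\kappa-\epsilon}$, together with a subconvex pointwise bound on the square--free coefficients, already suffices.) I expect the genuine obstacle to be exactly this analytic step: controlling the moments of the twisted $L$--function uniformly over a thin, congruence--restricted family of fundamental discriminants, robustly enough to survive the coprimality and square--freeness constraints --- which is why Theorem~\ref{sqfsbsd} is imported rather than derived here. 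Granting it, Theorem~\ref{thm:halfint} follows at once by applying the statement to the cusp form $g$ furnished by Proposition~\ref{rednewg}.
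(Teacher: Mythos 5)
You are right that the paper does not prove this statement: it is imported verbatim as \cite[Theorem 4.5]{bocherer2018fundamental} and used as a black box, so your decision to cite rather than reprove is exactly what the paper does. One substantive correction to your sketch of the cited proof, however: the exponent $5/8-\epsilon$ does \emph{not} come from a mollified-moment non--vanishing count for the twist family $L(F\otimes\chi_D,\tfrac12)$ (no such result of strength $X^{5/8}$ is available for this congruence- and sign-restricted family, and none is used); it is obtained by precisely the mechanism you relegate to the ``weaker conclusion of merely infinitely many'', namely dividing the Rankin--Selberg-type lower bound $\sum_{n\le X,\ n\ \text{square--free}}|a(g,n)|^2\gg_{g,\epsilon} X^{\kappa-\epsilon}$ (proved via the Shimura-lift relation expressing $a(g,tm^2)$ in terms of $a(g,t)$ and Hecke eigenvalues, as in \cite{saha2013siegel}) by the pointwise subconvex bound $|a(g,n)|^2\ll n^{\kappa-1+3/8+\epsilon}$ for square--free $n$ (Bykovskii/Blomer--Harcos, i.e.\ Burgess-quality subconvexity in the twist aspect via Waldspurger), which yields $\gg X^{\kappa}/X^{\kappa-1+3/8}=X^{5/8}$ non--zero terms. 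With that reassignment of roles your outline matches the actual argument behind the citation; as written, the step you identify as the crux would not close.
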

To complete the proof of Theorem \ref{thm:halfint}, first note that for any such $f$ as in the statement of Theorem \ref{thm:halfint} we get from Proposition \ref{rednewg} a $g\in S_\kappa(\Gamma_1(4L_1))$ whose Fourier coefficients are supported away from the level. Now using Theorem \ref{sqfsbsd} for $g$ and noting the relation between the Fourier coefficients of $f$ and $g$ we get Theorem \ref{thm:halfint}.\qed

\section{Fourier coefficients of Siegel cusp forms}
Using the corresponding results for Jacobi forms (Theorem \ref{th:thetanz}) and half integral weight forms (Theorem \ref{thm:halfint}) we now give the proof of Theorem \ref{thm:fund}.
First we need the following result due to Ibukiyama and Katsurada (see \cite{ibukiyama2012atkin}).
\begin{thma}
Let $F\in S_k^{2,new}(N,\chi)$ be non--zero. Then there exists a primitive $T$ such that $A(F,T)\neq 0$. 
\end{thma}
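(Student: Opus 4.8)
The plan is to prove the contrapositive: if $F\in S_k^{2,new}(N,\chi)$ has $A(F,T)=0$ for \emph{every} primitive $T$ (i.e.\ whenever $c(T)=1$), then $F$ already lies in the oldspace $S_k^{2,old}(N)$; since by definition (section \ref{sec:newforms}) $S_k^{2,new}(N,\chi)$ is the orthogonal complement of $S_k^{2,old}(N)$ for the Petersson product, this forces $F=0$. The entire argument is organized around the scaling map $V_d\colon G\mapsto G(dZ)$, which sends $S_k^2(M)$ into $S_k^2(dM)$ and acts on Fourier coefficients by $A(G|V_d,T)=A(G,d^{-1}T)$ (zero unless $d^{-1}T\in\Lambda_2$), together with its contraction partner $U_d$, the degree-$2$ Hecke-type operator attached to $\mathrm{diag}(1,1,d,d)$, acting by $A(G|U_d,T)=A(G,dT)$. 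The relevant feature of $V_d$ is that every element of the spanning set $\{G(dZ):dM\mid N\}$ of the oldspace has all Fourier coefficients supported on matrices of content divisible by $d$, hence imprimitive when $d>1$.

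First I would set up the content sieve. Writing each $T\in\Lambda_2^+$ uniquely as $T=c(T)\,T_0$ with $T_0$ primitive and using $\sum_{d\mid c(T)}\mu(d)=[c(T)=1]$, the hypothesis $A(F,T)=0$ for $c(T)=1$ lets me reorganize the Fourier expansion of $F$ coefficient-by-coefficient as the locally finite combination
\begin{equation*}
F=\sum_{d\ge 2}(-1)^{\omega(d)+1}\,(F|U_d)\,|V_d,
\end{equation*}
summed over squarefree $d\ge 2$, where each $(F|U_d)|V_d$ is exactly the part of $F$ supported on $\{T:d\mid c(T)\}$. Because $S_k^2(N)$ is finite dimensional and the oldspace is a closed subspace, it then suffices to show that each summand $(F|U_d)|V_d$ lies in $S_k^{2,old}(N)$: the finitely many coefficients pinning down an element of $S_k^2(N)$ are each affected by only finitely many $d$, so no genuine convergence question arises.

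The crux, and the step I expect to be the main obstacle, is showing that $(F|U_d)|V_d$ is genuinely \emph{old}, i.e.\ that $F|U_d$ descends to level $N/d$, so that with $M=N/d$ one has $dM\mid N$ and $(F|U_d)(dZ)$ belongs to the defining spanning set. The contraction $U_d$ does not lower the level for purely formal reasons, so this descent is the degree-$2$ analogue of the nontrivial level-lowering in Atkin--Lehner--Li theory and must be extracted from the actual structure of the newspace rather than from the sieve alone. To carry it out I would use that the good Hecke operators $T(p)$, $p\nmid N$, are normal and stabilize both the old- and newspaces, reduce to a common Hecke eigenform, and invoke Andrianov-type relations expressing the radial coefficients $A(F,mT_0)$ along a fixed primitive $T_0$ through $A(F,T_0)$ and the Hecke eigenvalues, so that vanishing of all primitive $A(F,T_0)$ propagates to all coefficients; the local analysis at the primes $p\mid N$, where the newforms correspond to the admissible local representations, supplies the required descent. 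Handling these bad primes, together with the reduction to a single eigenform, is where the real difficulty concentrates.
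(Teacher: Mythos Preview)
The paper does not prove this statement; it quotes it from Ibukiyama--Katsurada \cite{ibukiyama2012atkin} as an input. Their argument establishes exactly the level--lowering you flag as ``the crux'': they show, using their operator calculus for $U(p)$ at primes dividing the level, that if every primitive Fourier coefficient of $F\in S_k^{2,new}(N)$ vanishes then $F|U_p$ already descends to $S_k^2(N/p)$ for some $p\mid N$, whence $F=(F|U_p)|V_p$ is old. That descent is the content of their newform theory, not a corollary of it.

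Your sieve identity is correct as a formal equality of Fourier series, but the plan to verify term by term that $(F|U_d)|V_d\in S_k^{2,old}(N)$ collapses before you reach the hard step. For a prime $p\nmid N$ one has $F|U_p\in S_k^2(Np)$ only (exactly as in degree one, where $U_p=T(p)-p^{k-1}V_p$), so $(F|U_p)|V_p$ lives in $S_k^2(Np^2)$ and is not an element of $S_k^2(N)$ at all, let alone of its oldspace. Since your sum runs over all squarefree $d\ge 2$, almost every summand has a prime factor outside $N$ and suffers this defect; the identity $F=\sum_{d\ge 2}(\cdots)$ holds only through cancellations among forms of unbounded level, and nothing can be read off summand by summand. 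The alternative you sketch via Andrianov's radial relations is closer to viable, but those relations control $A(F,mT_0)$ only for $m$ coprime to $N$, and the reduction to a single Hecke eigenform inside $S_k^{2,new}(N)$ already presupposes the diagonalisability and old/new Hecke--stability that Ibukiyama--Katsurada are setting up. The missing idea is precisely the $U(p)$ level--lowering at $p\mid N$, and your outline does not supply it.
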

 For any $M\in \mrm{GL}(2, \mbb Z)$, we have that $A(F, T)=\mrm{det}(M)^kA(F, M^tTM)$. Thus if $A(F,T)\neq 0$, we can say that $A(F, M^tTM)\neq 0$. Now coming to the case at hand,  the quadratic form associated to $T$ represents infinitely many primes (see \cite{weber1882beweis}). Since $T$ is primitive, we can choose $M$ such that the matrix $T_0=M^tTM$ has the right lower entry to be an odd prime, say $p$ (see \cite[Lemma 2.1]{saha2013siegel} for exact arguments). Since the quadratic form defined by $T$ represents infinitely many odd primes, we can choose $p$ such that $(p,N)=1$. Let $T_0=\begin{psmallmatrix}
n_0&\mu_0/2\\
\mu_0/2& p
\end{psmallmatrix}$ and we have $A(F,T_0)\neq 0$. Thus we get a non--zero Fourier--Jacobi coefficient $\phi_{p}\in J_{k,p}(N)$ of $F$. Let $\phi_{p}(\tau,z)=\sum\nolimits_{ 4pn>r^2}c(n,r)e(n\tau)e(rz),
$
where $c(n,r)=A\left(F,\begin{psmallmatrix}
n&r/2\\
r/2& p
\end{psmallmatrix}\right)$. Since $A(F, T_0)\neq 0$, we see that $c(n_0,\mu_0)\neq 0$.
\begin{prop}\label{jacobiprime}
Let $F \in S_k^{2,new}(N)$ be non--zero. Then there exists an odd prime $p$ with $(p,N)=1$ such that the Fourier--Jacobi coefficient $\phi_{p}\in J_{k,p}^{cusp}(N)$ of $F$ is non--zero.
\end{prop}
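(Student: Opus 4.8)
The plan is to run exactly the three moves indicated in the paragraph preceding the statement, now organised so that the bookkeeping is transparent: first produce a \emph{primitive} non--zero Fourier coefficient of $F$ by the Ibukiyama--Katsurada theorem, then use the $\mrm{GL}(2,\mbb Z)$--action on Fourier coefficients together with a classical theorem on primes represented by primitive positive--definite binary quadratic forms to normalise the lower--right entry of that matrix to a prime coprime to $2N$, and finally read off the corresponding Fourier--Jacobi coefficient.

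First I would pass to a fixed nebentypus. Decomposing $F=\sum_{\chi\md N}F_\chi$ with $F_\chi\in S_k^{2,new}(N,\chi)$---the oldform maps $G(Z)\mapsto G(dZ)$ respect the nebentypus decomposition and the Petersson product pairs forms of different character to zero, so the Atkin--Lehner newspace splits over $\chi$ as well---pick $\chi$ with $F_\chi\neq 0$. The Ibukiyama--Katsurada theorem quoted above then gives a primitive $T=\begin{psmallmatrix}a&b/2\\ b/2&c\end{psmallmatrix}\in\Lambda_2^+$ with $A(F_\chi,T)\neq 0$; primitivity of $T$ (i.e.\ $c(T)=1$) is exactly the statement that the positive--definite binary quadratic form $Q_T(x,y)=ax^2+bxy+cy^2$ has coprime coefficients. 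Since $Q_T$ is a primitive positive--definite form, it represents infinitely many primes \cite{weber1882beweis}, so I may choose a prime $p$ represented by $Q_T$ with $(p,2N)=1$. Any such representation $p=Q_T(x_0,y_0)$ is by a primitive vector (otherwise $d^2\mid p$ for some $d>1$), hence $(x_0,y_0)$ extends to a $\mbb Z$--basis of $\mbb Z^2$ and there is $M\in\mrm{GL}(2,\mbb Z)$ with $M^tTM=T_0:=\begin{psmallmatrix}n_0&\mu_0/2\\ \mu_0/2&p\end{psmallmatrix}$ (this is \cite[Lemma 2.1]{saha2013siegel}). From $A(F_\chi,T)=\mrm{det}(M)^kA(F_\chi,M^tTM)$ we conclude $A(F_\chi,T_0)\neq 0$.

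Finally, in the Fourier--Jacobi expansion $F_\chi=\sum_{m\ge 0}\phi_m^{(\chi)}(\tau,w)e(mz)$ the coefficient $\phi_p^{(\chi)}$ is a Jacobi cusp form of weight $k$, index $p$, level $N$ and nebentypus $\chi$ whose $(n_0,\mu_0)$--th Fourier coefficient equals $A(F_\chi,T_0)\neq 0$, so $\phi_p^{(\chi)}\neq 0$; cuspidality is inherited from $F_\chi$. Since Jacobi forms of fixed level with distinct nebentypus characters are linearly independent (slash by $\Gamma_0(N)$ and use linear independence of Dirichlet characters), the full Fourier--Jacobi coefficient $\phi_p=\sum_\chi\phi_p^{(\chi)}$ of $F$ is then also non--zero, and it lies in $J_{k,p}^{cusp}(N)$ because $F$ is a cusp form. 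I expect the only genuinely delicate point to be this nebentypus bookkeeping---that the Atkin--Lehner newspace does split over characters (so that Ibukiyama--Katsurada applies to $F_\chi$) and that distinct--character Jacobi forms cannot cancel in $\phi_p$; every other ingredient is a direct transcription of the classical facts already cited in the text.
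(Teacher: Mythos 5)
Your proof is correct and follows essentially the same route as the paper: the Ibukiyama--Katsurada theorem produces a primitive non-vanishing coefficient, Weber's theorem plus \cite[Lemma 2.1]{saha2013siegel} move the lower-right entry to an odd prime $p\nmid N$ via the $\mrm{GL}(2,\mbb Z)$-action on Fourier coefficients, and the Fourier--Jacobi coefficient $\phi_p$ is then read off. The one difference is your nebentypus decomposition, which is superfluous in this setting: $S_k^{2,new}(N)$ here consists of forms invariant under $\Gamma_0^{2}(N)$ with trivial character, so $F=F_{\chi_0}$, the quoted theorem applies to $F$ directly, and the character bookkeeping and the final linear-independence argument collapse.
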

Write $N=p_1^{\alpha_1}p_2^{\alpha_2}...p_t^{\alpha_t}$. Then from Proposition \ref{jacobiprime} and Corollary \ref{cor:primitive} we get a $\mu\md 2p$ with $(\mu,2p)=1$ such that $h_\mu\neq 0$. Let $f(\tau)=h_\mu(4p\tau)$, then $f\in S_{k-\frac{1}{2}}(\Gamma_1((4p)^2N))$. From the Fourier expansion of $h_\mu$ in \eqref{hmuf}, we see that $f(\tau)=\sum\nolimits_{n\ge 1} a(f, n)q^n,$ where $a(f, n)=c\Big((n+\mu^2)/4p,\mu\Big)$.

Now using the Theorem \ref{thm:halfint} for level $4L=16p^2N$, we get primes $p_i|N$ and $0\le r_i\le \alpha_i$ and infinitely many odd, square--free integers $n$ with $(n,2pN)=1$ such that $a(f,p_1^{r_1}p_2^{r_2}...p_t^{r_t}n)\neq 0$. Thus for infinitely many odd and square--free integers $n$ co-prime to $N$ we see that
\begin{equation*}
a(f,p_1^{r_1}p_2^{r_2}...p_t^{r_t}n)=c\Big(\frac{p_1^{r_1}p_2^{r_2}...p_t^{r_t}n+\mu^2}{4p},\mu\Big)=A\left(F,\begin{psmallmatrix}
\frac{p_1^{r_1}p_2^{r_2}...p_t^{r_t}n+\mu^2}{4p} &\mu/2\\
\mu/2&p
\end{psmallmatrix}\right)\neq 0.
\end{equation*}
 If we denote by $T=\begin{psmallmatrix}
\frac{p_1^{r_1}p_2^{r_2}...p_t^{r_t}n+\mu^2}{4p} &\mu/2\\
\mu/2&p
\end{psmallmatrix}$, then $4\text{ det}(T)=p_1^{r_1}p_2^{r_2}...p_t^{r_t}n$. Thus we get that $A(F, T)\neq 0$ for infinitely many $T$ such that $4\text{ det}(T)$ is of the form $p_1^{r_1}p_2^{r_2}...p_t^{r_t}n$, where $n$ is odd and square--free and co-prime to $N$. \qed

\begin{rmk}
For any $X\ge 1$, let $\mathcal S_F(X)$ denote the set of odd and square--free integers $n\le X$ such that $(n,N)=1$, $\prod_{1}^{t}p_j^{r_j} n=4\mrm{det}(T)$ for some $T\in \Lambda_2^+$ and $A(F,T)\neq 0$.
The quantitative result $\mathcal S_F(X)\gg_{F,\epsilon} X^{5/8-\epsilon}$ follows from the corresponding quantitative result for half--integral weight cusp forms.
\end{rmk}

\begin{rmk}
The condition $k>2$ on the weight comes from the corresponding condition on the half--integral weight modular forms. For a discussion on this see \cite[Remark 4.7]{bocherer2018fundamental}.
\end{rmk}

\begin{rmk}
To get the \textit{fundamental} Fourier coefficients in Theorem \ref{thm:fund}, we need a non--zero \textit{primitive} theta component in Corollary \ref{cor:primitive}. That is, $h_\mu\neq 0$ with $(\mu, 2mN)=1$. But in Corollary \ref{cor:primitive} we get a theta component $h_\mu\neq 0$ such that $(\mu,2m)=1$. This is because the method used in proving Theorem \ref{th:thetanz} \textit{does not see} the level $N$. It will be interesting to see if Theorem \ref{th:thetanz} can be improved to get a non--zero primitive theta component.
\end{rmk}

\begin{rmk}
Theorem \ref{thm:fund} is not true  in $S_k^{2,old}(N)$. As an example, let $N$ be square--free and let $d$ be a proper divisor of $N$. Then the Fourier coefficients of $F(dZ)$, where $F\in S_k^{2,new}(N/d)$, are supported on $T\in \Lambda_2^+$ for which $4\text{ det}(T)$ is of the form $d^2 n$.
\end{rmk}

\printbibliography
\end{document}